\documentclass[a4paper,reqno,12pt]{amsart}
 \usepackage[foot]{amsaddr}
\usepackage{amsmath,amssymb}
\usepackage{graphicx, enumerate, enumitem, color}
\usepackage[usenames,dvipsnames,svgnames,table]{xcolor}
\usepackage{xparse}
\usepackage{tikz}
\usetikzlibrary{decorations.markings,arrows}
\usepackage[utf8]{inputenc}

\addtolength{\textwidth}{3 truecm}
\addtolength{\textheight}{1 truecm}
\setlength{\voffset}{-.6 truecm}
\setlength{\hoffset}{-1.3 truecm}
\parindent 3mm
\parskip   1mm

\colorlet{mylinkcolor}{violet}
\colorlet{mycitecolor}{YellowOrange}
\colorlet{myurlcolor}{Aquamarine}
\usepackage{hyperref}
\usepackage{xcolor}
\hypersetup{
    pdftitle={Matroid Secretaries},
    pdfauthor={},
    colorlinks,
    linkcolor={red!50!black},
    citecolor={blue!50!black},
    urlcolor={blue!80!black}
}

\makeatletter
\newtheorem*{rep@theorem}{\rep@title}
\newcommand{\newreptheorem}[2]{%
\newenvironment{rep#1}[1]{%
 \def\rep@title{#2 \ref{##1}}%
 \begin{rep@theorem}}%
 {\end{rep@theorem}}}
\makeatother

\newreptheorem{theorem}{Theorem}
\newreptheorem{lemma}{Lemma}

\newtheorem{theorem}{Theorem}
\newtheorem{conjecture}[theorem]{Conjecture}

\newtheorem{corollary}[theorem]{Corollary}

\newtheorem{lemma}[theorem]{Lemma}
\newtheorem{hypothesis}{Hypothesis}
\theoremstyle{remark}

\let\le\leqslant
\let\ge\geqslant
\let\leq\leqslant
\let\geq\geqslant

\DeclareMathOperator{\opt}{OPT}

\DeclareMathOperator{\dist}{dist}
\DeclareMathOperator{\cl}{cl}

\newcommand{\cM}{\mathcal{M}}
\def\be{\begin{equation}}
\def\ee{\end{equation}}

\newcommand{\cB}{\mathcal{B}}

\newcommand{\ev}{\mathbf{E}}
\newcommand{\pr}{\mathbf{P}}
\newcommand{\bR}{\mathbb{R}}
\newcommand{\nnr}{\mathbb{R}_{\ge 0}}
\newcommand{\alg}{{\mathsf{ALG}}}
\newcommand{\cX}{\mathcal{X}}
\newcommand{\con}{/}
\newcommand{\del}{\backslash}

\setcounter{tocdepth}{2}
\makeatletter

\makeatletter

\pretolerance=10000

\title[Matroid Secretaries]{The matroid secretary problem for minor-closed classes and random matroids}

\makeatletter
\let\old@setaddresses\@setaddresses
\def\@setaddresses{\bigskip\bgroup\parindent 0pt\let\scshape\relax\old@setaddresses\egroup}
\makeatother

\author[T.~Huynh]{Tony Huynh}
\address[T.~Huynh]{Department of Mathematics \\
  Universit\'e Libre de Bruxelles\\
  Belgium}
\email{tony.bourbaki@gmail.com}
\thanks{Tony Huynh is supported by ERC grant \emph{FOREFRONT} (grant agreement no.\ 615640) funded by the European Research Council under the EU's 7th Framework Programme (FP7/2007-2013).}

\author[P.~Nelson]{Peter Nelson}
\address[P.~Nelson]{Department of Combinatorics and Optimization\\ University of Waterloo \\ Canada}
\email{apnelson@uwaterloo.ca}

\begin{document}
\begin{abstract}
We prove that for every proper minor-closed class $\mathcal{M}$ of $\mathbb{F}_p$-representable matroids, there exists an $O(1)$-competitive algorithm for the matroid secretary problem on $\mathcal{M}$.  This result relies on the extremely powerful matroid minor structure theory being developed by Geelen, Gerards and Whittle.  

We also note that, for asymptotically almost all matroids, the matroid secretary algorithm that selects a random basis, ignoring weights, is $(2+o(1))$-competitive.  In fact, assuming the conjecture that almost all matroids are paving, there is a $(1+o(1))$-competitive algorithm for almost all matroids.  
\end{abstract}

\maketitle
\section{Introduction}
The \emph{secretary problem}~\cite{Lindley61, Dynkin63, Ferguson89} is a classic online selection problem.  The setup is as follows. A set of $n$ `secretaries' is presented to us in a uniformly random order.  When a secretary is presented to us, we learn his \emph{weight} (a non-negative real number) and we must make an irrevocable decision to hire him or not.  Once a secretary is hired, the process ends.  The goal is to design an algorithm which performs well \emph{in expectation}.  It is well-known that the algorithm that samples and rejects the first $\frac{1}{e}$ (here, $e$ is the base of the natural logarithm) of the secretaries and then hires the first secretary that is better than all secretaries in the sample will hire the best secretary with probability $\frac{1}{e}$. This is asymptotically the best possible~\cite{Dynkin63}.

The \emph{matroid secretary problem} is a very widely studied generalization of the secretary problem, first introduced by Babaioff, Immorlica, and Kleinberg \cite{BIK07}.  Here we are given a matroid $M$ whose elements are `secretaries' with an (unknown) weight function $w\colon E(M) \to \nnr$. The secretaries are again presented to us online in a random order.  When a secretary $x$ is presented to us, we learn its weight $w(x)$.  At this point, we must make an irrevocable decision to either hire $x$ or not.  The \emph{weight} of a set of secretaries is the sum of their weights.  Our goal is to design an algorithm that hires a set of secretaries with large weight, subject to the constraint that the hired set of secretaries is an independent set in $M$.

The \emph{expected value} of an algorithm on $(M, w)$ is the average weight of an independent set it produces, taken over all orderings of $E(M)$ and over any internal randomness of the algorithm.  We write $\opt(M,w)$ for the maximum weight of an independent set of $M$.  For $c \ge 1$, an algorithm is \emph{$c$-competitive} for $M$ if the expected value it outputs for $(M,w)$ is at least $\frac{1}{c}\opt(M,w)$ for all weight functions $w\colon E(M) \to \nnr$.  The following deep conjecture remains open. 

\begin{conjecture}[Babaioff, Immorlica, and Kleinberg \cite{BIK07}] \label{mainconjecture}
For all matroids $M$, there is a $c$-competitive matroid secretary algorithm for $M$, where $c$ is a universal constant independent of $M$.  
\end{conjecture}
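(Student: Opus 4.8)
The plan is to attack Conjecture~\ref{mainconjecture} by a divide-and-conquer over matroid connectivity, reducing it to structured building blocks for which constant-competitive algorithms are already available. Since a direct sum decouples completely --- a basis of $M_1\oplus M_2$ is a union of bases, weights do not interact across components, and we may run algorithms componentwise --- the first real step is a \emph{composition lemma} for $2$-sums: if $M=M_1\oplus_2 M_2$ along a basepoint $p$, and each $M_i$ admits an $\alpha$-competitive secretary algorithm, then $M$ admits an $O(\alpha)$-competitive one. The point is that a set $I\subseteq E(M)$ is independent in $M$ essentially when $I\cap E(M_i)$ is independent in $M_i$ for both $i$, subject to a single coupling constraint across $p$; so running the sub-algorithms on the two induced streams and outputting the union, dropping at most one element to satisfy the coupling, loses only a constant factor, and $\opt(M,w)$ is in turn controlled up to constants by the $\opt$ of the pieces (with $w(p):=0$). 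Iterating along the Cunningham--Edmonds tree decomposition into $3$-connected pieces, circuits and cocircuits, and checking that the constant does not compound badly over the tree, this reduces the conjecture to $3$-connected matroids.

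Second, for $3$-connected matroids I would split on branch-width. A matroid of branch-width at most $k$ comes with a decomposition of low boundary complexity along which a dynamic-programming argument --- in the spirit of the known constant-competitive algorithms for graphic and laminar matroids --- should give an $f(k)$-competitive algorithm; the delicate point, absent for graphs, is that bounded branch-width bounds neither the field size nor the density, so the sampling phase must be arranged so that the relevant "boundary'' information at each node of the branch-decomposition is learned before any irrevocable commitment is made on that node's elements. For $3$-connected matroids of \emph{large} branch-width one would hope to locate, by a Ramsey-type or grid-type argument, a large highly structured minor --- a large uniform, projective or affine geometry, spike or swirl minor --- then pass to that minor by a contraction/deletion losing only a constant fraction of $\opt(M,w)$, and solve the instance directly on the structured object, where an $O(1)$-competitive algorithm can be built by hand.

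The main obstacle --- and precisely the reason Conjecture~\ref{mainconjecture} is still open --- is the "large branch-width, no useful minor'' case. For a proper minor-closed class of $\mathbb{F}_p$-representable matroids, the Geelen--Gerards--Whittle structure theory converts large branch-width into a short list of structured configurations, which is exactly what makes the main theorem of this paper go through; but for the class of \emph{all} matroids there is no such dichotomy, and indeed even the class of all binary (or all cographic) matroids escapes the program above, because the relevant structure theorem applies only after excluding a fixed projective geometry and its constant degrades as that geometry grows --- taking the union over a minor-closed class that fails to be proper already breaks the argument. Since asymptotically almost all matroids are representable over no field, obey no bound on rank or density, and admit no known succinct structural description, neither the composition lemma nor any minor-extraction step can finish the job there. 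I therefore expect that the full conjecture requires either a genuine structure theorem for sparse or paving matroids of large branch-width, or --- more likely the real goal --- a structure-free argument that extracts a constant-competitive algorithm directly from the exchange axioms together with the randomness of the arrival order, with the decomposition program above serving only to reduce the conjecture to the $3$-connected, large-branch-width, non-representable case.
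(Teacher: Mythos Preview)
The statement you have attempted is Conjecture~\ref{mainconjecture}, which is an \emph{open problem}; the paper does not prove it and offers no proof to compare against. What the paper actually establishes are the partial results Theorem~\ref{main} (for proper minor-closed classes of $\mathbb{F}_p$-representable matroids, conditional on Hypothesis~\ref{structure}), Theorem~\ref{main2} (for asymptotically almost all matroids), and Theorem~\ref{main3} (conditional on the paving conjecture). Your write-up is not a proof but a research programme, and you say so yourself: you explicitly identify the ``large branch-width, no useful minor'' case as the obstacle and note that this is ``precisely the reason Conjecture~\ref{mainconjecture} is still open''. So there is nothing here to assess as a proof --- the proposal is, by its own account, incomplete.

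That said, it is worth noting where your sketch overlaps with the paper's machinery. Your ``composition lemma'' for $2$-sums and the idea of iterating along a tree of bounded-connectivity separations is essentially the content of Theorem~\ref{treealgorithm}, which handles full tree-decompositions of bounded thickness with a multiplicative loss of $(e+1)^k$. Your hope that large branch-width in a structured class yields a usable minor is exactly what Hypothesis~\ref{structure} (the Geelen--Gerards--Whittle structure theory) supplies for proper minor-closed subclasses of $\mathbb{F}_p$-representable matroids, and this is how the paper obtains Theorem~\ref{main}. Where your programme genuinely stalls --- general $3$-connected matroids of large branch-width with no representability hypothesis --- is precisely where the paper stalls too; no decomposition theorem of the required shape is known, and the paper makes no claim in that regime beyond the asymptotic statements of Theorems~\ref{main2} and~\ref{main3}.
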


The original motivation for considering the matroid secretary problem was mechanism design for online auctions.  Here the `secretaries' are agents and their weights correspond to the prices they are willing to pay for auctioned items. It turns out that matroid constraints model many real world situations and they unify many previously considered models.  For example, the situation in which an auctioneer wants to sell at most $k$ items corresponds to the case that $M$ is a rank-$k$ uniform matroid.  This is known as the \emph{multiple-choice secretary problem} and was essentially completely solved by Kleinberg \cite{Kleinberg05}.  For more on the connection to mechanism design, see \cite{BIKK08}.

Although Conjecture \ref{mainconjecture} remains open, steady progress has been made in improving the competitive ratio.  Babaioff, Immorlica, and Kleinberg \cite{BIK07} gave an $O(\log r)$-competitive algorithm for all matroids $M$, where $r$ is the rank of $M$.  This was improved to an $O(\sqrt{\log r})$-competitive algorithm by Chakraborty and Lachish \cite{CL2012}.  The state-of-the-art is an $O(\log \log r)$-competitive algorithm, first obtained by Lachish \cite{Lachish14}.  Using different tools, Feldman, Svensson, and Zenklusen \cite{FMZ15} give a simpler $O(\log \log r)$-competitive algorithm for all matroids.  

On the other hand, there exist constant-competitive algorithms for restricted classes of matroids.  We now give a partial list of such matroid classes.  Graphic matroids have a $2e$-competitive algorithm due to Korula and Pál \cite{KP09}.  Cographic matroids have a $3e$-competitive algorithm due to Soto \cite{Soto13}.  Regular matroids and max-flow min-cut matroids both have $9e$-competitive algorithms due to Dinitz and Kortsarz \cite{DK14}.  Our main result is a generalization of all these aforementioned constant-competitive algorithms.  

\subsection{Our Results}
For each prime $p$, we let $\mathbb{F}_p$ denote the field with $p$ elements.
A class of matroids $\cM$ is \emph{minor-closed} if $M \in \cM$ and $N$ a minor of $M$ implies that $N$ is also in $\cM$.  A \emph{proper minor-closed} class of $\mathbb{F}_p$-representable matroids is a minor-closed class of $\mathbb{F}_p$-representable matroids that is not equal to the class of all $\mathbb{F}_p$-representable matroids.  

Our main result is that Conjecture \ref{mainconjecture} holds for \emph{every} proper minor-closed class of $\mathbb{F}_p$-representable matroids.  Note that graphic matroids, cographic matroids, regular matroids, and max-flow min-cut matroids are all examples of proper minor-closed classes of $\mathbb{F}_p$-representable matroids.  Here is our main result.

\begin{theorem}\label{main}
    Suppose that Hypothesis~\ref{structure} holds and let $p$ be a prime. Then for every proper minor-closed class $\cM$ of $\mathbb{F}_p$-representable matroids, there is a constant $c(\cM)$
    such that every matroid in $\cM$ has a $c(\cM)$-competitive matroid secretary algorithm. 
\end{theorem}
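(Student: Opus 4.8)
The plan is to combine the decomposition supplied by Hypothesis~\ref{structure} with two ingredients: constant-competitive algorithms for the ``atomic'' matroids that appear in that decomposition, and a \emph{composition lemma} that assembles a constant-competitive algorithm for a bounded-adhesion tree-sum of matroids out of constant-competitive algorithms for the torsos. Concretely, I expect Hypothesis~\ref{structure} to supply a constant $k=k(\cM,p)$ such that every $M\in\cM$ has a tree-decomposition of adhesion at most $k$ in which each torso is either (a) of branch-width at most $k$ (the ``sparse'' case), or (b) obtained from a graphic or cographic matroid by deleting and contracting at most $k$ elements, adding at most $k$ elements, and applying a rank-$\le k$ perturbation of its representation (the ``graph-like'' case; note that cycle and bond matroids of graphs on a fixed surface are already graphic or cographic, so the surface plays no role here). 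Granting this, Theorem~\ref{main} reduces to (i) exhibiting $O(1)$-competitive algorithms for matroids of types (a) and (b), and (ii) proving the composition lemma.

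For (i): graphic matroids admit a $2e$-competitive algorithm by Korula and P\'al~\cite{KP09}, and cographic matroids a $3e$-competitive algorithm by Soto~\cite{Soto13}; since both classes are minor-closed, the deletions and contractions in type (b) cause no difficulty. To absorb the remaining modifications I would prove a \emph{robustness lemma}: if $N$ has a $c$-competitive algorithm and $M$ differs from $N$ only by adding at most $k$ elements and perturbing the representation by a matrix of rank at most $k$, then $M$ has an $O_k(c)$-competitive algorithm. The mechanism is a sampling phase that observes a constant fraction of the stream and thereby ``learns'' how $\opt(M,w)$ interacts with the boundedly many elements that are relevant to the difference between $M$ and $N$ (a rank-$\le k$ perturbation only affects independence relative to a $\le k$-dimensional subspace); one then passes to the appropriate minor of $N$ and runs its algorithm, losing only a constant factor both from the sampling and from guessing the interaction. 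Matroids of type (a) require nothing new: a branch-decomposition of width $k$ is itself a tree-decomposition of adhesion at most $k$ whose torsos are single elements, so such matroids are handled by the composition lemma with trivial base cases.

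The composition lemma is the crux. It should assert that there is a function $f$ so that if $M$ has a tree-decomposition of adhesion at most $k$ in which every torso has a $c$-competitive algorithm, then $M$ has an $f(c,k)$-competitive algorithm. I would model the argument on the way Dinitz and Kortsarz~\cite{DK14} handle $1$-, $2$- and $3$-sums in Seymour's decomposition of regular matroids, but carry it out for an arbitrary bounded-adhesion tree-sum \emph{in a single pass} rather than by naive recursion, so that the total loss is one constant factor rather than one per level. Two things must be established. First, a decomposition bound for the offline optimum: $\opt(M,w)$ is within a factor depending only on $k$ of the sum, over torsos $t$, of $\opt$ of the torso matroid under the weight function that $w$ induces on it; this follows from submodularity of the connectivity function together with the fact that each adhesion set has at most $k$ elements, so the weight double-counted or discarded at the interfaces is absorbed into the constant. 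Second, an online combination procedure: split the random arrival order into independent substreams, one per torso, by an independent random partition of the timeline; run each torso's algorithm on its substream; and reconcile the outputs into a single independent set of $M$.

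The obstacle lies in this reconciliation: independence in $M$ is \emph{not} the union of independence in the torsos, so naive parallel execution fails. The fix is to orient the decomposition tree and commit, consistently along it, to a fixed policy on the (few) adhesion elements --- for instance, always treating an adhesion set as contracted on the child side and deleted on the parent side --- so that the combined independent sets do fit together in $M$, at a cost bounded by the total weight on adhesion elements, which is again controlled by the first bound and the bounded adhesion. Ensuring simultaneously that each torso algorithm is fed a genuinely uniform permutation of its own ground set, that the reconciliation does not cascade uncontrollably through the tree, and that torsos are set up so their ``$\backslash$'' and ``$/$'' variants still carry a constant fraction of the optimum, is where the bulk of the technical care will go. Once this composition lemma is in place, chaining it with the constant-competitive algorithms for atomic pieces from step (i) and with the decomposition from Hypothesis~\ref{structure} yields Theorem~\ref{main}.
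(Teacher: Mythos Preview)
Your high-level architecture --- invoke Hypothesis~\ref{structure}, prove constant-competitive algorithms for the atomic pieces, prove a composition lemma for bounded-thickness tree-decompositions --- is exactly the paper's. But two of your three technical blocks are mis-aimed.

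\textbf{The atomic pieces are not what you guess.} Hypothesis~\ref{structure} does not promise that the ``sparse'' torsos have bounded branch-width; it only promises that a torso which is \emph{not} close to a represented frame matroid has no $M(K_n)$-minor. A matroid with no $M(K_n)$-minor can have arbitrarily large branch-width (any cographic matroid, for instance), so your plan to fold type~(a) pieces into the composition lemma via their branch-decomposition does not apply. The paper instead uses Geelen's density bound (Theorem~\ref{excludeclique}): a simple $\mathbb{F}_p$-representable matroid with no $M(K_n)$-minor satisfies $|X|\le p^{p^{3n}}r(X)$ for every $X$, whence Soto's ``sparse'' Theorem~\ref{sparse} gives an $ep^{p^{3n}}$-competitive algorithm directly. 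For type~(b), the hypothesis gives proximity to a \emph{represented frame} matroid, not merely graphic or cographic; Soto's Theorem~\ref{frame} supplies the base $2e$-competitive algorithm there. The robustness under bounded perturbation is then handled not by a sampling/learning phase but by two short deterministic lemmas (Lemmas~\ref{lift} and~\ref{projection}): a single lift or projection degrades the competitive ratio by at most a factor of $e+1$, so perturbation distance $\le t$ costs $(e+1)^t$.

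\textbf{The composition lemma is simpler than you fear.} In the paper's tree-decomposition the parts $X_v$ \emph{partition} $E(M)$, so there are no adhesion elements to reconcile and no need to randomly partition the timeline: the arrival stream already splits into independent uniform substreams, one per part. Your worry that the outputs may not combine to an independent set of $M$ is legitimate, but the resolution is not a weight-accounting argument over the interfaces. It is Lemma~\ref{lambda} plus Lemma~\ref{multprojection}: for a leaf $\ell$, the matroid $M\con(E\setminus X_\ell)$ is obtained from $M|X_\ell$ by at most $k$ projections, and Lemma~\ref{multprojection} converts a $c$-competitive algorithm for $M|X_\ell$ into a $c(e+1)^k$-competitive algorithm for $M|X_\ell$ whose output is \emph{always independent in} $M\con(E\setminus X_\ell)$. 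Running this on $X_\ell$ and, inductively, the algorithm for $M\del X_\ell$ on the complement produces two sets that are automatically jointly independent in $M$; there is no cascading, and the constant is $c(e+1)^k$ regardless of $|V(T)|$. This single projection idea replaces all of the reconciliation machinery you anticipate.
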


To be forthright with the reader, we stress that Theorem \ref{main} relies on a structural hypothesis communicated to us by Geelen, Gerards, and Whittle, which has not yet appeared in print.  This hypothesis is stated as Hypothesis~\ref{structure}.  The proof of Hypothesis~\ref{structure} will stretch to hundreds of pages, and will be a consequence of their decade-plus `matroid minors project'. \footnote{We are aware of the problematic family of dyadic matroids recently discovered by Grace and van Zwam~\cite{Gv18}, but they are not counterexamples to Hypothesis~\ref{structure}.} This is a body of work generalising Robertson and Seymour's graph minors structure theorem \cite{rs03} to matroids representable over a fixed finite field, leading to a solution of Rota's Conjecture~\cite{GGW14}. See \cite{Geelen2015} for a discussion of the project.  

In proving Theorem \ref{main}, we develop techniques to handle each of the ingredients that appear in the statement of  Hypothesis~\ref{structure}. We believe these techniques may be of independent interest.  Also, our proof of Theorem \ref{main} is one of the few instances of using deep results from matroid theory in attacking Conjecture \ref{mainconjecture}.

To complement our main result, we also show that Conjecture \ref{mainconjecture} holds for almost all matroids.  For each $M$, let $\mathsf{RB}$ denote the matroid secretary algorithm that chooses a uniformly random basis $B$ of $M$, and selects precisely the elements of $B$, ignoring weights. 

\begin{theorem}\label{main2}
    For asymptotically almost all matroids on $n$ elements, the algorithm $\mathsf{RB}$ is $(2+\tfrac{3}{\sqrt{n}})$-competitive.
\end{theorem}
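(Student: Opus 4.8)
The plan is to show that the competitive ratio of $\mathsf{RB}$ on a matroid $M$ is at most $\bigl(\min_e p_e\bigr)^{-1}$, where $p_e$ is the probability that a uniformly random basis of $M$ contains $e$, and then to check that for almost all $M$ every $p_e$ exceeds $\tfrac12-o(1/\sqrt n)$. \textbf{Step 1: reduce to the marginals $p_e$.} Write $n=|E(M)|$, $r=r(M)$, let $b(M)$ denote the number of bases of $M$, let $B$ be a uniformly random basis, and set $p_e=\pr[e\in B]$. Since $\mathsf{RB}$ commits to $B$ from its own randomness (the matroid being known up front) independently of the arrival order, its value on $(M,w)$ equals $\ev[w(B)]=\sum_{e\in E(M)}w(e)\,p_e$, the expectation being over the choice of $B$. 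Choosing a maximum-weight basis $B^{\ast}$, so that $w(B^{\ast})=\opt(M,w)$ because the weights are nonnegative,
\[
\ev[w(B)]\ \ge\ \sum_{e\in B^{\ast}}w(e)\,p_e\ \ge\ \Bigl(\min_{e}p_e\Bigr)w(B^{\ast})\ =\ \Bigl(\min_{e}p_e\Bigr)\opt(M,w).
\]
Hence $\mathsf{RB}$ is $\bigl(\min_e p_e\bigr)^{-1}$-competitive for $M$, and it suffices to prove $\min_e p_e\ge\bigl(2+\tfrac3{\sqrt n}\bigr)^{-1}$ for almost all $M$.

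\textbf{Step 2: bound $p_e$ by a non-base count.} Let $N$ be the number of dependent $r$-subsets of $E(M)$ and $N_e$ the number of those containing $e$; then $b(M)=\binom nr-N$ and the number of bases containing $e$ is $\binom{n-1}{r-1}-N_e$, so, using $N_e\le N$ and the identity $n\binom{n-1}{r-1}=r\binom nr$,
\[
p_e\ =\ \frac{\binom{n-1}{r-1}-N_e}{\binom nr-N}\ \ge\ \frac{\binom{n-1}{r-1}-N}{\binom nr-N}\ =\ \frac rn-\frac{N(n-r)}{n\bigl(\binom nr-N\bigr)}\ \ge\ \frac rn-\frac{2N}{\binom nr},
\]
the last inequality holding once $N\le\tfrac12\binom nr$. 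These estimates are valid for every $e$ with no hypothesis on $M$; in particular a loop or a coloop would force $N$ to be of order $\binom nr$, so no degenerate case requires separate treatment.

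\textbf{Step 3: feed in the structure of a random matroid.} Here I would invoke the matroid enumeration results of Pendavingh and van der Pol (resting on the bound $\log m(n)\le(1+o(1))\log m_{\mathrm{sp}}(n)$ of Bansal--Pendavingh--van der Pol together with the Graham--Sloane-type lower bounds, where $m(n)$, $m_{\mathrm{sp}}(n)$ count, respectively, matroids and sparse paving matroids on $[n]$): for almost all matroids on $[n]$ one has $r(M)\in\{\lfloor n/2\rfloor,\lceil n/2\rceil\}$ and $N=o\!\bigl(\binom nr/\sqrt n\bigr)$ — i.e.\ almost every matroid has rank essentially $n/2$ and almost as many bases as the uniform matroid $U_{r,n}$. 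Substituting into Step~2 gives $p_e\ge\tfrac rn-o(1/\sqrt n)\ge\tfrac12-o(1/\sqrt n)$ uniformly in $e$, hence $\min_e p_e\ge\bigl(2+\tfrac3{\sqrt n}\bigr)^{-1}$ for all sufficiently large $n$; with Step~1 this proves Theorem~\ref{main2} (only large $n$ is relevant, the statement being asymptotic, and the constant $3$ is merely a convenient choice).

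\textbf{The main obstacle.} Steps 1 and 2 are routine. The entire weight of the argument sits in Step 3, and specifically in the claim that a typical matroid has only $o\!\bigl(\binom nr/\sqrt n\bigr)$ dependent $r$-sets (equivalently $b(M)\ge(1-o(1/\sqrt n))\binom nr$): this must either be quoted directly from the matroid-counting literature or extracted from the sparse-paving lower bound and the matching upper bound on $m(n)$. The concentration of $r(M)$ at $\lfloor n/2\rfloor$ is, by comparison, a short consequence of the same counting estimates.
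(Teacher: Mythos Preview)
Your proposal is essentially the paper's own argument: bound the competitive ratio of $\mathsf{RB}$ by $(\min_e p_e)^{-1}$, lower-bound $p_e$ via $p_e \ge \bigl(\binom{n-1}{r-1}-N\bigr)/\bigl(\binom{n}{r}-N\bigr)$, and then plug in the Pendavingh--van~der~Pol results on the number of non-bases and on rank concentration. The computations in your Steps~1 and~2 match the paper's almost line for line.

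There is one inaccuracy worth flagging. In Step~3 you assert that almost all matroids have $r(M)\in\{\lfloor n/2\rfloor,\lceil n/2\rceil\}$; this is conjectured but not known. What Pendavingh and van~der~Pol actually prove (and what the paper cites) is the weaker statement that $r(M)\in[n/2-\beta\sqrt n,\ n/2+\beta\sqrt n]$ for any fixed $\beta>\sqrt{\ln(2)/2}\approx 0.589$. With that correction your chain in Step~3 becomes $p_e\ge r/n - o(1/\sqrt n)\ge \tfrac12 - \beta/\sqrt n - o(1/\sqrt n)$ rather than $\tfrac12 - o(1/\sqrt n)$. Since $(2+3/\sqrt n)^{-1}=\tfrac12-\tfrac{3}{4\sqrt n}+O(1/n)$ and one may take $\beta<3/4$, the conclusion still follows, so the argument survives; but your written claim about exact rank concentration is not supported by the literature you invoke. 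The paper handles this by proving the result for any constant $\gamma>\sqrt{8\ln 2}$ in place of~$3$, which is exactly the slack needed to absorb the $\beta/\sqrt n$ term.
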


The veracity of Theorem~\ref{main2} is not terribly surprising, although many properties which `obviously' hold for almost all matroids are very difficult to establish.  For example, the fact that almost all matroids are $k$-connected (for each fixed $k$) was only proved recently \cite{Pv18}. Indeed, the proof of Theorem~\ref{main2} relies on recent breakthrough results of Pendavingh and van der Pol \cite{PP15, Pv18}. 

One can think of Theorem~\ref{main} and Theorem~\ref{main2} as complementary results.  Theorem~\ref{main2} asserts that the `typical' matroid has a constant-competitive algorithm, so we expect counterexamples to Conjecture \ref{mainconjecture} to come from structured classes of matroids.  On the other hand, Theorem~\ref{main} says that restricting to proper-minor closed classes of $\mathbb{F}_p$-representable matroids also cannot produce counterexamples to Conjecture \ref{mainconjecture}.

Note that for almost all matroids, the algorithm $\mathsf{RB}$ has a better competitive ratio than the best algorithm for the classical secretary problem.  Indeed, under a widely believed conjecture in asymptotic matroid theory, we note that there is in fact a $(1+o(1))$-competitive algorithm for almost all matroids. 

\begin{theorem} \label{main3}
If asymptotically almost all matroids are paving, then there is a $(1+o(1))$-competitive matroid secretary algorithm for asymptotically almost all matroids.
\end{theorem}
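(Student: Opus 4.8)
The plan is to show that the algorithm $\mathsf{RB}$ — which picks a uniformly random basis and ignores weights — is already $(1+o(1))$-competitive on paving matroids of large rank, and to handle separately the (negligibly many, under the hypothesis) non-paving matroids and those of small rank. Recall that a matroid $M$ on $n$ elements is \emph{paving} if every circuit has size at least $r(M)$. The key point is that in a paving matroid of rank $r$, the only dependent sets of size $r$ are the circuit-hyperplanes, and these are `rare': any $r$-subset that is \emph{not} a circuit-hyperplane is a basis. Thus for a uniformly random $r$-subset $S$, the probability that $S$ is independent is $1 - h/\binom{n}{r}$, where $h$ is the number of circuit-hyperplanes of $M$.

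First I would quantify how small $h$ is for almost all paving matroids. Using the Pendavingh--van der Pol bounds on the number of matroids (and sparse paving matroids) on $n$ elements, one gets that for asymptotically almost all matroids $M$ on $n$ elements, the number of circuit-hyperplanes satisfies $h = o\!\left(\binom{n}{r}\big/ r\right)$, or at least $h/\binom{n}{r} \to 0$ fast enough; in fact the standard heuristic is that $h$ is subexponential in $\binom{n}{r/2}$ while $\binom{n}{r}$ dominates. Conditioned on the hypothesis that almost all matroids are paving, this says: for almost all matroids, a uniformly random $r$-subset is a basis with probability $1-o(1)$.

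Next I would run the following refinement of $\mathsf{RB}$ (or analyze $\mathsf{RB}$ directly together with a greedy correction). Order the elements by weight; the maximum-weight independent set $\opt(M,w)$ is obtained greedily, and since $M$ is paving of rank $r$, $\opt$ consists of the $r$ heaviest elements unless those $r$ heaviest elements happen to form a circuit-hyperplane, in which case $\opt$ swaps in the $(r+1)$st element for the lightest — a loss of at most a $(1-\tfrac1r)$ factor, which is $1+o(1)$ for $r \to \infty$. Now the algorithm: sample and reject a $\tfrac1{\log r}$ fraction (say) of the elements to learn a weight threshold, then accept every subsequent element above the threshold as long as the accepted set stays independent. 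Because $M$ is paving, the accepted set can fail to be independent only once we have accepted $r$ elements forming a circuit-hyperplane, so with probability $1-o(1)$ (using the rarity of circuit-hyperplanes, and the fact that the top $r$ weights are a uniformly random $r$-set in the random order) we simply accept the $r$ heaviest among the non-sampled elements. The sampling loss is an $O(1/\log r)$ fraction of $\opt$, and the probability of hitting a circuit-hyperplane contributes an $o(1)$ loss; combining, the value is $(1-o(1))\opt(M,w)$. Finally, matroids of rank $o(\log n)$ or rank $n - o(\log n)$, and the $o(1)$-fraction of non-paving matroids, are discarded into the "asymptotically almost all" quantifier.

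The main obstacle is the second step: extracting from \cite{PP15, PP16} (conditioned on the paving hypothesis) a sufficiently strong bound on the typical number of circuit-hyperplanes, uniform enough that the union bound over the failure events in the algorithm's execution still gives $o(1)$. The counting results bound the number of matroids, not directly the number of circuit-hyperplanes of a random one, so one needs the standard transference argument: if a positive (bounded-away-from-zero) fraction of matroids had $h$ large, there would be too many matroids overall, contradicting the upper bound on $|\{\text{matroids on } n \text{ elements}\}|$. Making that argument quantitatively match what the algorithm needs — rather than merely $h = o(\binom{n}{r})$ — is the delicate part; everything else is a routine adaptation of the classical secretary analysis plus the observation that paving matroids are "almost free".
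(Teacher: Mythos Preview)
Your proposal has a genuine gap, and the paper's route is both simpler and avoids the obstacle you flag.

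First, the claim that $\mathsf{RB}$ is $(1+o(1))$-competitive on paving matroids is false already for uniform matroids: in $U_{n/2,n}$ every element lies in exactly half the bases, so $\mathsf{RB}$ is only $2$-competitive there, not $1+o(1)$. Second, in your threshold algorithm you assert that ``the top $r$ weights are a uniformly random $r$-set in the random order'' --- but the weight function is adversarial, so the set of $r$ heaviest elements is a \emph{fixed} $r$-set chosen by the adversary, who is free to place it on a circuit-hyperplane. This means your circuit-hyperplane counting programme cannot help: even if almost every $r$-set is a basis, the adversary needs only a single circuit-hyperplane to defeat the argument as written. Finally, the phrase ``routine adaptation of the classical secretary analysis'' hides the real content: achieving $(1+o(1))$ even for uniform matroids is Kleinberg's multiple-choice secretary theorem, not the classical $e$-competitive analysis.

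The paper's argument sidesteps all of this with one structural observation: if $M$ is paving of rank $r$, then every $(r-1)$-set is independent, so the truncation $T(M)$ to rank $r-1$ is the uniform matroid $U_{r-1,n}$. One then runs Kleinberg's algorithm $\mathsf{UNI}$ on $T(M)$, which is $(1-5/\sqrt{r-1})^{-1}$-competitive; any output is independent in $M$, and $\opt(T(M),w) \ge (1-\tfrac{1}{r})\opt(M,w)$, giving a $(1-6/\sqrt{r})^{-1}$-competitive algorithm $\mathsf{PAV}$ for $M$. Since almost all matroids have $r \approx n/2$ by the Pendavingh--van der Pol rank bound, this is $(1+O(n^{-1/2}))$-competitive. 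No bound on the number of circuit-hyperplanes is needed anywhere.
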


\subsection{Our Methods}
Roughly,  Hypothesis~\ref{structure} asserts that every matroid in a proper minor-closed class of $\mathbb{F}_p$-representable matroids admits a tree-like decomposition into pieces that are `close' to being `sparse' or `graph-like'.  It is quite interesting that many of the ingredients of Hypothesis~\ref{structure} have already been considered in the literature.  For example, it is already known that `sparse' and `graph-like' matroids (definitions will be given later) have constant-competitive algorithms \cite{Soto13}.  Moreover, 
 Dinitz and Kortsarz \cite{DK14} have a very nice framework for obtaining constant-competitive algorithms for matroids decomposable as $1$-, $2$-, and $3$-sums from matroids for which we already have constant-competitive algorithms.   
 
 In Theorem \ref{treealgorithm}, we extend this framework to allow tree-decompositions of any fixed `thickness'.  This is a non-trivial extension, and is necessary since Hypothesis~\ref{structure} requires `thickness' greater than $3$.  We also prove that with an appropriate definition of `closeness', if a minor-closed class $\cM'$ is close to another minor-closed class $\cM$ for which we already have a constant-competitive algorithm, then there is a constant-competitive algorithm for $ \cM'$ (with a worse competitive ratio).  Both of these results are independent of any matroid structure theory results.  For example, plugging in the regular matroid decomposition theorem of Seymour \cite{Seymour80} into Theorem \ref{treealgorithm}, we recover the constant-competitive algorithm for regular matroids.   Plugging in Hypothesis \ref{structure} into Theorem \ref{treealgorithm} gives Theorem \ref{main}.

\subsection{Limitations and Related Results}
Probably the greatest drawback of the matroid secretary algorithm in Theorem~\ref{main} is that it requires knowledge of the entire matroid upfront.  This is the same model as originally introduced by Babaioff, Immorlica, and Kleinberg \cite{BIK07}, but many of the known matroid secretary algorithms work in the setting where the matroid structure is also revealed online, see \cite{HKC04}.   It is a nice open problem whether Theorem~\ref{main} holds in this setting.  

We also note that the existence of constant-competitive algorithms for transversal matroids~\cite{DP12, KRTV13} and laminar matroids~\cite{IW11, HP13, JSZ13} is not implied by Theorem \ref{main}.  Laminar matroids are a minor-closed class, and every laminar matroid is representable over a sufficiently large field (see \cite{FO17}), but there is no single finite field over which all laminar matroids are representable.  The following strengthening of our main result would also imply a constant-competitive algorithm for laminar matroids. 

\begin{conjecture} \label{conj:minor-closed}
For every proper minor-closed class of matroids $\mathcal M$, there exists a constant $c(\mathcal M)$ such that every matroid in $\cM$ has a $c(\cM)$-competitive matroid secretary algorithm. 
\end{conjecture}

This is by no means an exhaustive list of related results. We refer the reader to the survey of Dinitz \cite{Dinitz13} for more information on the matroid secretary problem. 
 
\subsection{Outline of Paper}

Section~\ref{sec:basics} provides a quick review of matroid terminology and notation. In Section \ref{sec:prelim}, we describe the two `basic' classes of matroids that appear in Hypothesis~\ref{structure}.  In Section \ref{sec:closeness} we consider a notion of `closeness' of matroids, and show that constant-competitive algorithms are preserved for matroids that are `close'.  In Section \ref{sec:trees}, we introduce our definition of tree-decompositions for matroids and show that we can obtain constant-competitive algorithms for matroids that have a bounded `thickness' tree-decomposition into pieces for which we already have constant-competitive algorithms.  In Section \ref{sec:regular}, we quickly re-derive the constant-competitive algorithm for regular matroids. We prove our main result (Theorem \ref{main}) in Section \ref{sec:main}.  Finally, in Section \ref{sec:random}, we prove our results on random matroids. 

\section{Matroid Basics} \label{sec:basics}

For a more thorough introduction to matroid theory we refer the reader to Oxley \cite{Oxley11}, whose notation and terminology we follow.  
A \emph{matroid} is a pair $M=(E, \mathcal{I})$ where $E$ is a finite \emph{ground set} and $\mathcal{I}$ is a collection of subsets of $E$ satisfying

\begin{itemize}
    \item $\emptyset \in \mathcal{I}$,
    \item if $I \in \mathcal{I}$ and $J \subseteq I$, then $J \in \mathcal I$, and 
    \item if $I,J \in \mathcal I$ and $|I| < |J|$, then there exists $x \in J$ such that $I \cup \{x\} \in \mathcal I$.
\end{itemize}

A set $X \subseteq E$ is \emph{independent} if $X \in \mathcal I$, and \emph{dependent} otherwise.  The maximal independent sets are \emph{bases}, and the minimal dependent sets are \emph{circuits}.  The \emph{rank function} of $M$ is the function $r_M: 2^E \to \mathbb{Z}_{\geq 0}$ where $r_M(X)$ is the size of a largest independent set contained in $X$.  The \emph{closure} of $X \subseteq E$, denoted $\cl_M(X)$, is the largest set $Y$ such that $r_M(Y)=r_M(X)$.  The \emph{dual} of a matroid $M$, denoted $M^*$, is the matroid whose bases are the complements of the bases of $M$.  We now give several examples of matroids. 

Let $G=(V,E)$ be a graph.  The \emph{cycle matroid} of $G$, denoted $M(G)$, is the matroid with ground set $E$ and circuits the edge sets of cycles of $G$.  Such matroids are called \emph{graphic matroids}. \emph{Cographic matroids}  are the duals of graphic matroids.  

Let $\mathbb F$ be a field.  An \emph{$\mathbb F$-matrix} is a matrix with entries in $\mathbb F$.  Let $A$ be an $\mathbb F$-matrix with its columns labelled by $E$.  We let $M[A]$ be the matroid with ground set $E$ whose independent sets are the subsets of $E$ corresponding to linearly independent columns of $A$.  A matroid $M$ is \emph{$\mathbb F$-representable} if $M=M[A]$ for some $\mathbb F$-matrix $A$.   It is \emph{binary} if it is $\mathbb F_2$-representable and it is \emph{regular} if it is $\mathbb F$-representable for every field $\mathbb F$.  

A \emph{rank-$r$ uniform} matroid is a matroid where all the $r$-subsets are bases.  We denote the rank-$r$ uniform with $n$ elements as $U_{r,n}$.  It is well-known exactly which fields the rank-$2$ uniform matroids are representable over.  We only require the following direction.  

\begin{lemma} \label{uniformrep}
$U_{2, k}$ is not $\mathbb{F}$-representable if $|\mathbb{F}| \leq k-2$.  
\end{lemma}

\begin{proof}
Suppose not and let $A$ be a $2 \times k$ $\mathbb{F}$-matrix which represents $U_{2,k}$. By performing row operations we may assume that $A_{1,1}=A_{2,2}=1$ and $A_{1,2}=A_{2,1}=0$.  Since every two columns are independent, $A_{1, \ell} \neq 0$ for all $\ell \in \{3, \dots, k\}$.  By column scaling, we may assume that $A_{1, \ell} =1$ for all $\ell \in \{3, \dots, k\}$.  Now note that $A_{2, \ell} \neq 0$ for all $\ell \in \{3, \dots, k\}$.  Since $|\mathbb{F} \setminus \{0\}| \leq k-3$, there exist distinct $\ell, \ell' \in \{3, \dots, k\}$ such that $A_{2, \ell}=A_{2, \ell'}$, which contradicts that every two columns of $A$ are independent.  
\end{proof}

We now describe a relation on the set of all matroids analogous to the minor relation on graphs.  
Let $M$ be a matroid with ground set $E$, and let $C$ and $D$ be disjoint subsets of $E$.  We let $M \con C \del D$ be the matroid with ground set $E \setminus (C \cup D)$ and rank function $r_{M \con C \del D}(X)=r_M(X \cup C)-r_M(C)$.  We say that $M \con C \del D$ is obtained from $N$ by \emph{contracting} $C$ and \emph{deleting} $D$.    A matroid $N$ is a \emph{minor} of a matroid $M$, written $N \preceq M$, if $N$ is isomorphic to $M \con C \del D$ for some choice of $C$ and $D$.  A \emph{restriction} of $M$ is a minor of $M$ obtained by only deleting edges.  We write $M|X$ to denote $M \del (E - X)$.

The minor relation on matroids generalizes the minor relation on graphs in the following sense.  Let $G=(V,E)$ be a graph.  It is easy to show (see 3.1.2 and 3.2.1 in~\cite{Oxley11}) that for all $x \in E$, $M(G) \del x=M(G \del x)$ and $M(G) \con x = M(G \con x)$.  To give the reader some more intuition, we now describe what the minor relation means for representable matroids.  

Let $M$ be a representable matroid and $x \in E(M)$.  It is useful to regard the elements of $M$ as points in a projective space $\mathbb{P}$.  Deleting $x$ has an obvious meaning; we simply remove the point $x$.  To contract $x$, we choose a hyperplane $H$ not containing $x$, and we project the remaining points of $M$ onto $H$ from $x$.  It is easy to show that the resulting minor is independent of the choice of $H$.  

A class $\mathcal{M}$ of matroids is \emph{minor-closed} if $M \in \mathcal M$ and $N \preceq M$ implies $N \in \mathcal M$. By the previous two paragraphs, the class of graphic matroids is minor-closed, and for every field $\mathbb{F}$, the class of $\mathbb{F}$-representable matroids is  minor-closed. It follows that the class of regular matroids is also minor-closed.  

A \emph{proper minor-closed} class of $\mathbb{F}$-representable matroids is a minor-closed class of $\mathbb{F}$-representable matroids not equal to the class of all $\mathbb{F}$-representable matroids.  In this paper, we are interested in proper minor-closed classes of $\mathbb{F}_p$-representable matroids, where $p$ is a prime.  For example, the class of regular matroids is a proper minor-closed class of binary matroids.

\section{Sparse and Graph-like Matroids} \label{sec:prelim}
We now introduce the two classes of matroids that appear as `pieces' of the tree-decomposition in Hypothesis~\ref{structure}.  Let $M$ be a matroid and $N$ be the matroid obtained from $M$ deleting loops and suppressing parallel elements.  Let $\gamma \in \mathbb{R}$.  We say that $M$ is \emph{$\gamma$-sparse} if $|X| \le \gamma r_N(X)$ for all $X \subseteq E(N)$.  To give some intuition to the reader, note that there does \emph{not} exist $\gamma \in \mathbb{R}$ such that all graphic matroids are $\gamma$-sparse. This is because the cycle matroid of the complete graph $K_n$ has $\binom{n}{2}$ elements and rank $n-1$.  On the other hand, all graphic matroids of planar graphs are $3$-sparse, since it is well-known that a planar graph on $n$ vertices contains at most $3n-6$ edges.

Soto~\cite[Theorem 5.2]{Soto13} proved that there is constant-competitive matroid secretary algorithm for $\gamma$-sparse matroids.

\begin{theorem}\label{sparse}
    If $M$ is a $\gamma$-sparse matroid, then there is a $\gamma e$-competitive matroid secretary algorithm for $M$. 
\end{theorem}

Let $k \in \mathbb{N}$.  A \emph{$k$-column sparse} matroid is a matroid isomorphic to $M[A]$, where $A$ has at most $k$ non-zero entries per column.  Note that graphic matroids are $2$-column sparse.  Soto~\cite[Theorem 5.4]{Soto13} also proved that $k$-column sparse matroids have constant-competitive matroid secretary algorithms for every fixed constant $k$.  We only need this result for $k=2$, and we call $2$-column sparse matroids \emph{represented frame matroids}.  We use the current best ratio in~\cite[Theorem 14]{STV18}.

\begin{theorem}\label{frame}
    If $M$ is a represented frame matroid, then there is a $4$-competitive matroid secretary algorithm for $M$. 
\end{theorem}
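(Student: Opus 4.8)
The plan is to adapt to the frame setting the vertex-sampling algorithm of Korula and P\'al~\cite{KP09} for graphic matroids, which is what produces the constant $2e$. Fix a matrix $A$ over $\mathbb{F}$ representing $M$, with columns indexed by $E = E(M)$ and rows indexed by a set $V$, in which each column has at most two nonzero entries. For $e \in E$ write $\mathrm{supp}(e) \subseteq V$ for the set of rows in which column $e$ is nonzero, so $|\mathrm{supp}(e)| \le 2$; we view $A$ as a \emph{biased graph} on vertex set $V$, where an element with $|\mathrm{supp}(e)| = 2$ is a link joining the two vertices of $\mathrm{supp}(e)$, an element with $|\mathrm{supp}(e)| = 1$ is an (unbalanced) loop at its unique vertex, and an element with $\mathrm{supp}(e) = \emptyset$ is a loop of $M$, which we never select. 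The one fact from frame matroid theory that we use is the standard rank formula $r_M(S) = |V(S)| - b(S)$, where $V(S)$ is the set of rows met by $S$ and $b(S)$ is the number of balanced connected components of the subgraph induced by $S$.

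The algorithm is as follows. Pick a uniformly random linear order $\pi$ on $V$, and give each element an independent uniform arrival time in $[0,1]$ (a harmless reformulation of the random arrival order). Assign each $e$ to the vertex $\rho(e) \in \mathrm{supp}(e)$ that is $\pi$-larger when $|\mathrm{supp}(e)| = 2$, and to its unique vertex when $|\mathrm{supp}(e)| = 1$. For each $v \in V$, run the classical single-choice secretary algorithm on the sub-stream of elements with $\rho(e) = v$, using the sampling rule ``reject those elements of this sub-stream that arrive before time $1/e$, then take the first later arrival whose weight beats every weight seen before time $1/e$ in this sub-stream''. Output the union of the (at most $|V|$) selected elements, one per vertex.

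The crux is to check that the output $H$ is independent in $M$. By construction each vertex is the $\rho$-image of at most one element of $H$ (the pick of its secretary), so $\rho$ restricts to an injection $E(D) \to V(D)$ on any subset $D \subseteq H$; in particular $|E(D)| \le |V(D)|$. On the other hand, the rank formula shows that every circuit $D$ of $M$ is connected and satisfies $|E(D)| \ge |V(D)|$, with equality forcing $D$ to be a balanced cycle. Hence a circuit contained in $H$ would have to be a balanced cycle $v_1 v_2 \cdots v_k v_1$ --- but then, letting $v^{*}$ be the $\pi$-smallest of $v_1, \dots, v_k$, both links of $D$ at $v^{*}$ are assigned to their $\pi$-larger endpoints, so $v^{*}$ lies outside the image of $\rho|_{E(D)}$, contradicting surjectivity. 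So $H$ contains no circuit and is independent.

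For the competitive ratio, fix a maximum-weight independent set $I^{*}$ of $(M,w)$. By the rank formula every connected component of the subgraph on $I^{*}$ has at most as many edges as vertices, so we may orient the edges of $I^{*}$ with all in-degrees at most one; setting $\beta(e) \in \mathrm{supp}(e)$ to be the head of $e$ gives an injection $\beta \colon I^{*} \to V$. For each $e \in I^{*}$ the algorithm sets $\rho(e) = \beta(e)$ with probability at least $\tfrac12$ (exactly $\tfrac12$ if $|\mathrm{supp}(e)| = 2$, since this asks $\beta(e)$ to be the $\pi$-larger endpoint, and $1$ if $|\mathrm{supp}(e)| = 1$), and when that happens the heaviest element assigned to $\beta(e)$ has weight at least $w(e)$. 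Since the $\beta(e)$ are distinct and the secretary run at a vertex picks the heaviest element assigned to it with probability at least $1/e$, linearity of expectation gives $\ev[w(H)] \ge \tfrac1e \sum_{e \in I^{*}} w(e)\,\Pr[\rho(e) = \beta(e)] \ge \tfrac{1}{2e}\,w(I^{*}) = \tfrac{1}{2e}\opt(M,w)$, so the algorithm is $2e$-competitive. The step requiring real care is the independence proof above: it relies on knowing the circuit structure of frame matroids --- balanced cycles and the ``handcuff''-type configurations, all spanning at most as many vertices as they have elements --- which is exactly where the biased-graph rank formula enters, with the random order $\pi$ eliminating the one leftover case; the competitive bound is then a routine charging argument once $\beta$ is in hand.
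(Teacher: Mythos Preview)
The paper does not prove this theorem at all; it simply quotes it as Theorem~5.4 of Soto~\cite{Soto13}. Your proposal is therefore not being compared to any argument in the present paper, but to the cited source. That said, your proof is correct and is essentially the argument Soto gives: interpret the two-nonzero-entries representation as a gain (biased) graph, use the frame-matroid rank formula $r_M(S)=|V(S)|-b(S)$ to control circuits, and run the Korula--P\'al vertex-partition secretary scheme with a random vertex order to assign each link to its larger endpoint. The independence check via the $\pi$-minimum vertex of a putative balanced cycle, and the $\tfrac{1}{2e}$ charging through an injection $\beta\colon I^*\to V$ obtained by orienting each (tree or unicyclic) component of $I^*$, are exactly the right ingredients.

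Two small points worth tightening. First, the sentence ``the rank formula shows that every circuit $D$ of $M$ is connected and satisfies $|E(D)|\ge |V(D)|$'' is better justified by combining the rank formula with $|D|=r_M(D)+1$: since a circuit of a frame matroid is graph-connected one has $b(D)\in\{0,1\}$, giving $|E(D)|=|V(D)|+1-b(D)$, which yields your dichotomy directly. Second, in the competitive-ratio step you should make explicit that the $1/e$ guarantee of the per-vertex secretary holds \emph{conditioned on $\pi$}, since $\pi$ determines the bucket $S_v$ while the arrival times are independent of $\pi$; you use this implicitly, but stating it removes any doubt about the conditioning in the final linearity-of-expectation line.
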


\section{Lifts and Projections} \label{sec:closeness}
Let $M$ be a matroid.  A \emph{loop} is an element $x$ such that $r_M(\{x\})=0$, and a \emph{free element}\footnote{We use \emph{free element} instead of \emph{coloop} because it is more common in the computer science literature.} is an element $x$ such that $r_{M^*}(\{x\})=0$. Note that if $x$ is a loop, no secretary algorithm can select $x$. On the other hand, if $x$ is a free element, we may assume every secretary algorithm always selects $x$.  

Let $M_1$ and $M_2$ be matroids on a common ground set $E$. We say that $M_1$ is a \emph{distance-$1$ perturbation} of $M_2$ if there is a matroid $M$ and a non-loop, non-free element $x$ of $M$  such that $\{M \con x, M \del x\} = \{M_1,M_2\}$. We say that  $M \con x$ is a \emph{projection} of $M \del x$ and $M \del x$ is a \emph{lift} of $M \con x$ (many authors call these \emph{elementary} projections/lifts). The \emph{perturbation distance} between two matroids $M,M'$ on a common ground set is the minimum $t$ for which there exists a sequence $M = M_0, M_1, \dotsc, M_t = M'$ where each $M_i$ is a distance-$1$ perturbation of $M_{i-1}$.  We write $\dist(M,M')$ for this quantity. If $M=N$, we may take $M=M_0=N$, and so $\dist(M,M)=0$.

In this section, we show that the existence of constant-competitive algorithms is robust under a bounded number of lifts/projections.  We first note that this is true for deleting elements, which can be proved by setting elements to have zero weight appropriately. 

\begin{lemma}\label{deletion}
    If there is a $c$-competitive matroid secretary algorithm for $M$, then there is a $c$-competitive algorithm for every restriction of $M$. 
\end{lemma}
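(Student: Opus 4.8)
Recall that a \emph{restriction} of $M$ is a matroid of the form $M|S = M\setminus D$ where $S\subseteq E(M)$ and $D=E(M)\setminus S$. Fix such an $S$, write $M'=M|S$, and let $\alg$ be a $c$-competitive matroid secretary algorithm for $M$. The plan is to reduce an instance on $M'$ to an instance on $M$ by padding with zero-weight elements. Given a weight function $w'\colon S\to\nnr$, define $w\colon E(M)\to\nnr$ by $w|_S=w'$ and $w(e)=0$ for all $e\in D$. I would first check that $\opt(M',w')=\opt(M,w)$: any independent set of $M'$ is independent in $M$ and has the same $w$- and $w'$-weight, giving $\opt(M,w)\ge\opt(M',w')$; conversely, if $I$ is independent in $M$ with $w(I)=\opt(M,w)$, then $I\cap S$ is independent in $M'$ and $w'(I\cap S)=w(I\cap S)=w(I)$ since the elements of $D$ contribute nothing, giving the reverse inequality.

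Next I would describe the algorithm for $M'$. Since the matroid is known upfront, the set $D$ is known; when the elements of $S$ are presented online in uniformly random order, insert the elements of $D$ at independent, uniformly random positions among them, declaring each to have weight $0$. A standard check shows this yields a uniformly random ordering of $E(M)$. Run $\alg$ on this ordering of $(M,w)$, and whenever $\alg$ hires an element, forward that decision for the elements of $S$ and discard the (zero-weight) decisions on $D$; equivalently, output the intersection with $S$ of the set $\alg$ hires.

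Finally I would verify correctness and the competitive ratio. The set $\alg$ hires is independent in $M$, so its restriction to $S$ is a subset of an independent set of $M$ contained in $S$, hence independent in $M'=M|S$; thus the output is feasible. Its expected $w'$-weight equals the expected $w$-weight of the set $\alg$ hires, because the forwarded elements of $D$ have weight $0$; this expectation is precisely the value of $\alg$ on $(M,w)$, which is at least $\tfrac1c\opt(M,w)=\tfrac1c\opt(M',w')$. Hence the constructed algorithm is $c$-competitive for $M'$. There is no real obstacle here; the only point requiring a line of care is that interleaving $D$ uniformly at random into a uniformly random ordering of $S$ produces the uniform distribution on orderings of $E(M)$, which is what lets us invoke the guarantee of $\alg$.
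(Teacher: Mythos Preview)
Your proof is correct and follows exactly the approach the paper indicates, namely extending the weight function by zeros on the deleted elements and running the original algorithm. You have simply supplied the details (equality of optima, random interleaving to recover the uniform arrival order, feasibility and value of the output) that the paper leaves implicit.
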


\begin{lemma}\label{lift}
    Let $N$ be a lift of a matroid $M$. If there is a $c$-competitive algorithm for $M$ then there is a $\max(e,2c)$-competitive algorithm for $N$.
\end{lemma}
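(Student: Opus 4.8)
The plan is to run the given $c$-competitive algorithm $\alg$ for $M$ on the ground set of $N$ and argue that, because $N$ is a lift of $M$, independent sets of $N$ are "almost" independent in $M$ in a controlled way. Recall $N = (M' \del x)$ and $M = (M' \con x)$ for some matroid $M'$ on $E \cup \{x\}$ with $x$ a nonloop. The key structural fact is that if $I$ is independent in $N = M' \del x$, then $I$ is independent in $M' $, so in the projection $M = M' \con x$ we have $r_M(I) \ge |I| - 1$; that is, $I$ contains an independent set of $M$ of size at least $|I|-1$. Dually, every independent set of $M$ is independent in $N$. Thus the independence systems of $M$ and $N$ differ only by this "off by one" phenomenon along the single element $x$.

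First I would handle the two regimes separately depending on $\opt(N,w)$ relative to the largest single weight. Let $e^\ast$ be an element of maximum weight $w_{\max}=w(e^\ast)$. If $\opt(N,w) \le \tfrac{e}{e-1}\cdot$ (something tuned so the classical single-item secretary bound suffices) — more cleanly: we run, with probability $\tfrac12$, the classical secretary algorithm (sample a $1/e$ fraction, then take the first element beating the sample), which is $e$-competitive for picking the single heaviest element and always returns an independent set of $N$ since a singleton $\{e^\ast\}$ is independent ($x$ nonloop implies no new loops are created... more carefully, any non-loop of $N$ is a non-loop, and loops can be pre-discarded). This gives value $\ge \tfrac{1}{2e} w_{\max}$. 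With the remaining probability $\tfrac12$, I run $\alg$ on $N$ but feed it the matroid $M$: when $\alg$ wants to select $e$, we check independence in $M$ and also verify the running set stays independent in $N$; since every independent set of $M$ is independent in $N$, this second check never fires and we simply output what $\alg$ outputs. This returns an independent set $I$ of $M$ (hence of $N$) with $\ev[w(I)] \ge \tfrac1c \opt(M,w)$.

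The remaining point is to compare $\opt(M,w)$ with $\opt(N,w)$. Let $I^\ast$ be a maximum-weight independent set of $N$. As noted, $I^\ast$ is independent in $M'$, so removing at most one element (namely, an element witnessing the rank drop across contracting $x$) yields a set independent in $M = M'\con x$; choosing to remove a minimum-weight element of $I^\ast$, we get $\opt(M,w) \ge w(I^\ast) - \min_{f \in I^\ast} w(f) \ge w(I^\ast) - w_{\max}$... that's too weak, so instead remove the minimum-weight element, giving $\opt(M,w) \ge w(I^\ast)\bigl(1 - \tfrac{1}{|I^\ast|}\bigr)$ when $|I^\ast|$ is large, while if $|I^\ast|$ is small we fall back on $w_{\max} \ge \tfrac{1}{|I^\ast|}w(I^\ast) = \tfrac{1}{|I^\ast|}\opt(N,w)$ and use the classical branch. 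Balancing: the heavy branch gives $\ge \tfrac{1}{2e}\opt(N,w)$ whenever $w_{\max} \ge \tfrac1k \opt(N,w)$, and the $\alg$-branch gives $\ge \tfrac{1}{2c}(\opt(N,w) - w_{\max}) \ge \tfrac{1}{2c}(1-\tfrac1k)\opt(N,w)$ otherwise; taking $k=2$ makes both branches deliver $\ge \tfrac{1}{\max(e,2c)}\cdot \tfrac12 \cdot(\ldots)$, and a short computation pins the constant at $\max(e,2c)$.

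The main obstacle I anticipate is the fact that $\alg$, as a black box, may rely on seeing $M$ and could in principle select a set independent in $M$ but — wait, that is fine since independent in $M$ implies independent in $N$; the genuine subtlety is rather the reverse direction used in bounding $\opt(M,w)$: making sure that deleting a single cleverly chosen (minimum-weight) element of an optimal independent set of $N$ really does land inside $M$, which requires the precise statement that $I$ independent in $M'\del x$ forces $r_{M'\con x}(I)\ge |I|-1$. This is where I would be most careful, and also in verifying the probabilistic split yields exactly $\max(e,2c)$ rather than, say, $2\max(e,c)$ — the correct tuning is to not split evenly but to choose the mixture probability as a function of $c$, or to observe that one branch's guarantee already dominates. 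I would write the clean version by conditioning on whether $\opt(N,w) \le 2 w_{\max}$ or not, rather than randomizing, if the model permits a deterministic case split on this comparison (it does not, since $w$ is revealed online) — hence the randomized mixture is the honest route, and the bookkeeping there is the one genuinely fiddly step.
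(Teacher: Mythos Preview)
Your approach differs from the paper's in a way that leaves a genuine gap. The paper does \emph{not} randomize between the classical secretary algorithm and $\alg_M$; instead it partitions $E(N)$. Letting $P$ be the parallel class of $x$ in the ambient matroid $L$, the elements of $P-\{x\}$ are precisely the loops of $M$, so $\alg_M$ never selects them. The paper therefore runs the classical secretary algorithm on $P-\{x\}$ and runs $\alg_M$ on the remaining elements \emph{simultaneously}, taking the union of the two outputs; this union is always independent in $N$. The analysis then does a deterministic case split on the optimal basis $B$ of $(N,w)$: if the unique circuit $C$ of $L$ in $B\cup\{x\}$ has $|C|\ge 3$, one shows $\opt(M,w)\ge\tfrac12\opt(N,w)$ by removing a minimum-weight element \emph{of $C-\{x\}$} (not of $B$), and the $\alg_M$-part alone gives $\tfrac{1}{2c}\opt(N,w)$; if $C=\{x,x'\}$ then $x'\in P-\{x\}$ and the two parts add to give at least $\tfrac1e w(x')+\tfrac1c\opt(M,w)\ge\tfrac{1}{\max(e,c)}\opt(N,w)$.

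Your proposal has two concrete problems. First, the step ``remove a minimum-weight element of $I^\ast$'' is wrong: from $r_M(I^\ast)\ge|I^\ast|-1$ you may only conclude that removing an element of the unique circuit of $M$ contained in $I^\ast$ restores independence, and that element need not be the global minimum-weight element of $I^\ast$. The inequality you dismissed as ``too weak'', $\opt(M,w)\ge\opt(N,w)-w_{\max}$, is actually the correct one to use here. Second, and more fundamentally, a randomized mixture cannot deliver $\max(e,2c)$. With probability $p$ on the secretary branch you get at best $\tfrac{p}{e}w_{\max}+\tfrac{1-p}{c}(\opt(N,w)-w_{\max})$, and optimizing $p=\tfrac{e}{c+e}$ yields only $(c+e)$-competitive; this equals or beats $\max(e,2c)$ when $c\ge e$ but is strictly worse whenever $c<e$. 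The paper avoids this loss precisely by running both subroutines on disjoint parts of the ground set, so that their guarantees \emph{add} rather than convexly combine.
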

\begin{proof}
    Let $\alg_M$ be a $c$-competitive algorithm for $M$. Let $L$ be a matroid and $x$ be a non-loop and non-free of $L$ such that $L \con x = M$ and $L \del x = N$. Let $P$ be the parallel class of $L$ containing $x$. Note that each element in $P - \{x\}$ is a loop in $M$, and hence will never be selected by $\alg_M$.  We specify an algorithm $\alg_N$ for $N$ as follows:
    \begin{itemize}
        \item as the elements of $P - \{x\}$ are received, $\alg_N$ runs the classical secretary algorithm to select one. If $P = \{x\}$, then no element is chosen in this way.
        \item as the elements of $E(M) - P$ are received, $\alg_N$ passes them to $\alg_M$ and selects them as $\alg_M$ does. 
    \end{itemize}
    
    Let $I$ be the set of elements chosen in the first way (so $|I| \le 1$) and $J$ be the set of elements chosen in the second way. Clearly $J$ is independent in $N \con I$ and so $I \cup J$ is independent in $N$. It remains to show that $\ev(w(I \cup J)) \ge \tfrac{1}{\max(e,2c)}\opt(N,w)$. Let $B$ be a max-weight basis of $(N,w)$ and let $C$ be the unique circuit of $L$ with $\{x\} \subseteq C \subseteq B \cup \{x\}$. To analyse $\alg_N$ we distinguish two cases. 
    
    If $|C| \ge 3$, then let $C' = C-\{x\}$ and let $y$ be a minimum-weight element of $C'$. Now $w(C'-\{y\}) \ge \tfrac{1}{2}w(C')$ and $B - \{y\}$ is a basis of $M$ satisfying $w(B-\{y\}) = w(B-C') + w(C'-\{y\}) \ge w(B-C') + \tfrac{1}{2}w(C') \ge \tfrac{1}{2}w(B)$. Therefore $\opt(M,w) \ge \tfrac{1}{2}\opt(N,w)$ and so $\ev(w(J)) \ge \tfrac{1}{c} \opt(M,w) \ge \tfrac{1}{2c}\opt(N,w)$. 
    
    If $|C| < 3$, then (since $x$ is a non-loop of $L$) we have $C = \{x,x'\}$ for some $x' \in P-\{x\}$. In this case $B = \{x'\} \cup J'$ for some independent set $J'$ of $M$. Now $I$ is chosen by an $e$-competitive secretary algorithm on $P-\{x\}$, so $\ev(w(I)) \ge \tfrac{1}{e}w(x')$. Moreover, since $B$ is optimal and $\{x'\} \cup B_0$ is independent in $N$ for every basis $B_0$ of $M$, we have $w(B - \{x'\}) = \opt(M,w)$. Therefore
    \begin{align*} \ev(w(I \cup J)) &\ge \tfrac{1}{e}w(x') + \tfrac{1}{c}\opt(M,w) \\
    &= \tfrac{1}{e}w(x') + \tfrac{1}{c}w(B- \{x'\}) \\
    &\ge \tfrac{1}{\max(e,c)}\opt(N,w).
    \end{align*}
    
    It follows from these two cases that $\alg_N$ is $\max(e,2c)$-competitive for $N$.
\end{proof}

\begin{lemma}\label{multprojection}
    Let $N$ be a matroid obtained from a matroid $M$ by a sequence of $t$ projections. Let $L$ be the set of loops of $N$. If there is a $c$-competitive algorithm for $M$, then there is a $c(e+1)^t
    $-competitive algorithm for $M \del L$ whose output is always independent in $N$. 
\end{lemma}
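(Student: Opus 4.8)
The plan is to carry out the iteration suggested by the remark preceding the statement, but with one subtlety that must be addressed: a naive ``apply Lemma~\ref{projection} $t$ times'' argument only yields an algorithm competitive against $\opt(N,w)$, and this can be strictly smaller than $\opt(M\del L,w)$, since a projection can turn an independent pair into a parallel pair. So I would set up an induction whose hypothesis tracks competitiveness against a restriction of $M$ itself, rather than against the current intermediate matroid.

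I would write $M = N_0, N_1, \dotsc, N_t = N$ where each $N_i$ is a projection of $N_{i-1}$, and let $L_i$ denote the set of loops of $N_i$. A preliminary observation is that a projection never destroys a loop: if $y$ is a loop of $P\del x$ (with $x$ a nonloop of $P$), then $r_P(\{y\}) = r_{P\del x}(\{y\}) = 0$, so $y$ is a loop of $P$ and hence of $P\con x$; consequently $L_0 \subseteq L_1 \subseteq \dotsb \subseteq L_t = L$. I would then prove by induction on $i$ that for each $i$ there is a $c(e+1)^i$-competitive algorithm for $M\del L_i$ whose output is always independent in $N_i$. The base case $i=0$ is immediate from Lemma~\ref{deletion} applied to the given algorithm for $M$, since $L_0$ is the loop set of $M$ and any output independent in $M\del L_0$ is independent in $M = N_0$.

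For the inductive step, given a $c(e+1)^{i-1}$-competitive algorithm $\alg_{i-1}$ for $M\del L_{i-1}$ with output always independent in $N_{i-1}$, I would first apply Lemma~\ref{deletion} to restrict away the elements of $L_i\setminus L_{i-1}$ (discarding any such element $\alg_{i-1}$ happens to select, which can only preserve independence), obtaining a $c(e+1)^{i-1}$-competitive algorithm $\alg'$ for $M\del L_i$ whose output $J$ now avoids $L_i$ and is still independent in $N_{i-1}$. Then, fixing $P$ and a nonloop $x$ of $P$ with $P\con x = N_i$ and $P\del x = N_{i-1}$, I would rerun the proof of Lemma~\ref{projection} verbatim with $\alg'$ in the role of ``$\alg_{M\del L}$'' and $N_i$ in the role of ``$N$'': flip a coin that is heads with probability $\tfrac{e}{e+1}$; on heads, run a classical secretary algorithm on $N_i\del L_i$ to select a single element; on tails, run $\alg'$ while only pretending to hire any element whose selection would create a dependency in $N_i$. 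The crucial point is that since $J$ avoids the loops of $N_i$ and is independent in $N_{i-1} = P\del x$, it is independent in $P$, so $J\cup\{x\}$ contains at most one circuit of $P$; hence on tails the output is $J$ with at most one element removed, and that element has weight at most that of a maximum-weight element of $E(M)\setminus L_i$. The same computation as in Lemma~\ref{projection} then gives competitive ratio $(e+1)\cdot c(e+1)^{i-1} = c(e+1)^i$ against $\opt(M\del L_i,w)$, and the output is independent in $N_i$ by construction, completing the induction. Taking $i=t$ yields the lemma.

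I expect the main obstacle to be exactly this bookkeeping. One has to notice that the proof of Lemma~\ref{projection} never really uses that its inner algorithm is competitive for the \emph{source} of the projection --- only that it runs on the correct ground set, meets its competitive guarantee against whichever optimum one is tracking, and has output independent in the pre-projection matroid --- and then thread the clause ``output independent in $N_i$'' through the induction so that the circuit-counting step ($J\cup\{x\}$ has at most one $P$-circuit) remains valid at every stage, which in turn forces the restriction away of the newly created loops $L_i\setminus L_{i-1}$ at each step. Once this is in place, each step is a direct replay of an argument already in hand.
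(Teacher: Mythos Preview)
Your proof is correct and matches the paper's approach, which consists solely of the sentence ``Similarly, iterating Lemma~\ref{projection} yields the following.'' Your care in strengthening the inductive hypothesis to track competitiveness against $\opt(M \del L_i, w)$ rather than against the intermediate $\opt(N_{i-1}\del L_i, w)$ is warranted---a black-box iteration of Lemma~\ref{projection} would indeed only yield the latter---and is presumably what the authors intend by ``iterating''; in the paper's sole application of this lemma (inside Theorem~\ref{treealgorithm}) the final projection $M'(\ell)$ is arranged to have no loops, so $L=\varnothing$ and the distinction disappears, but your argument establishes the lemma as stated.
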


\begin{proof}
We proceed by induction on $t$.  The case $t=0$ is trivial.  Suppose $t \geq 1$ and let $N'$ be the matroid such that $N'$ is obtained from $M$ by $t-1$ projections and $N$ is obtained from $N'$ by a single projection.  Let $L'$ and $L$ be the set of loops of $N'$ and $N$, respectively.   By induction, there is a $c(e+1)^{t-1}$-competitive algorithm for $M \del L'$ whose output is always independent in $N'$.  Since $L' \subseteq L$, by Lemma~\ref{deletion}, there is a $c(e+1)^{t-1}$-competitive algorithm $\alg_{M \del L}$ for $M \del L$ whose output is always independent in $N'$.  Let $P$ be a matroid and $x$ be a non-loop and non-free element of $P$ so that $P \con x = N$ and $P \del x = N'$. Let $X$ be the random variable taking the value $\mathsf{heads}$ with probability $\tfrac{e}{e+1}$ and $\mathsf{tails}$ with probability $\tfrac{1}{e+1}$. We define another matroid secretary algorithm $\alg_N$ for $M \del L$ as follows:
    \begin{itemize}
        \item if $X = \mathsf{heads}$, then we run a classical secretary algorithm on $N \del L$ to select just one element. 
        \item if $X = \mathsf{tails}$, then we run $\alg_{M \del L}$, except we only pretend to hire any element whose selection would create a dependency in $N$ with the elements already chosen.
    \end{itemize}
    In both cases, $\alg_N$ clearly produces an independent set in $N$.
 Fix a weighting $w$ of $M \del L$. Let $x_0 \in E(M \del L)$ have maximum weight, and let $I$ be the set selected by $\alg_N$. If $X = \mathsf{heads}$, then $\ev(w(I) | X = \mathsf{heads}) \ge \tfrac{1}{e}w(x_0)$. Moreover, if $J$ is the set output by $\alg_{M \del L}$, then $J \cup \{x\}$ contains at most one circuit of $P$.  It follows that if $X = \mathsf{tails}$ then $I$ is obtained from $J$ by removing at most one element, so $w(I) \ge w(J) - w(x_0)$. Thus
    \begin{align*}
        \ev(w(I)) &= \tfrac{e}{e+1} \ev(w(I)|X = \mathsf{heads}) + \tfrac{1}{e+1}\ev(w(I)| X = \mathsf{tails}) \\
        & \ge \tfrac{1}{e+1}w(x_0)  + \tfrac{1}{e+1}(\ev(w(J) - w(x_0)) | X = \mathsf{tails})\\
        & = \tfrac{1}{e+1}\ev(w(J) | X = \mathsf{tails})\\
        & \ge \tfrac{1}{c(e+1)^t}\opt(M \del L,w).
    \end{align*}
    It follows that $\alg_N$ is $(e+1)^tc$-competitive for $M \del L$.
\end{proof}

In particular, since every independent set in $N$ is an independent set of $M \del L$, the algorithm $\alg_N$ is $(e+1)c$-competitive for $N$. Since $(e+1)c > \max(e,2c)$ for $c \ge 1$, we can thus combine Lemmas~\ref{lift} and~\ref{multprojection} with an inductive argument to yield the following. 

\begin{lemma}\label{perturb}
Let $t \in \mathbb{N}$ and let $M$ and $N$ be matroids with $\dist(M,N) \le t$. If there is a $c$-competitive algorithm for $M$, then there is a $c(e+1)^t$-competitive algorithm for $N$.  
\end{lemma}

\section{Tree-decompositions} \label{sec:trees}
In this section, we introduce a notion of tree-decompositions of matroids, and give a constant-competitive matroid secretary algorithm for matroids having a `bounded-thickness' tree-decomposition into pieces for which constant-competitive algorithms are known. Note that there already exist various notions of tree-decompositions of matroids, such as \emph{branch-decompositions} \cite{GGW02}.  Therefore, to avoid confusion we use the term \emph{thickness} for the `width' of our tree-decompositions.

We begin by defining the connectivity function of a matroid.  
 Let $M = (E,r_M)$ be a matroid.  For disjoint sets $X,Y \subseteq E$, the \emph{local connectivity} between $X$ and $Y$ is $\sqcap_M(X,Y):= r_M(X) + r_M(Y) - r_M(X \cup Y)$. The \emph{connectivity function} $\lambda_M$ of $M$ is the function $\lambda_M\colon 2^{E} \to \mathbb{Z}_{\geq 0}$ defined by $\lambda_M(X)= \sqcap_M(X,E-X)$. 
 
 To give the reader some intuition, we now describe what the connectivity function $\lambda_M$ means for graphic and representable matroids.  Let $G=(V,E)$ be a graph and $M=M(G)$ be the cycle matroid of $G$.  For $X \subseteq E$, we let $G[X]$ be the subgraph of $G$ induced by the edges in $X$.  It is easy to show that if $G[X]$ and $G[E-X]$ are both connected, then $\lambda_{M}(X)+1$ is the number of vertices in both $G[X]$ and $G[E-X]$ (see~\cite[Lemma 8.1.7]{Oxley11}).  Thus, $\lambda_{M}(X)$ is a measure of how `connected' $X$ is to $E - X$.  Indeed, it is possible to define graph connectivity using $\lambda_M$ (see~\cite[Section 8.6]{Oxley11}).     

Let $M=M[A]$ for some matrix $A$ with column labels $E$.  For $X \subseteq E$ we let $\langle X \rangle$ be the subspace generated by the columns corresponding to $X$. In this case, $\lambda_M(X)$ is the dimension of $\langle X \rangle \cap \langle E-X \rangle$.  So again, $\lambda_M(X)$ is a measure of how `connected' $X$ is to $E-X$.     

 Next, we show that the connectivity function $\lambda_M$ is related to the perturbation distance between deletion and contraction minors of $M$.  
 
 \begin{lemma}\label{lambda}
     Let $M = (E,r_M)$ be a matroid and let $X \subseteq E$. Then $M \con (E-X)$ is obtained from $M|X$ by a sequence of $\lambda_M(X)$ projections. 
 \end{lemma}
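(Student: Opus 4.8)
The plan is to set $k = \lambda_M(X)$ and to exhibit an explicit sequence of $k$ projections (elementary quotients) taking $M|X$ to $M\con(E-X)$. The natural way to realize a projection is to use an auxiliary element: if $N = P\del x$ and $N' = P\con x$ for a nonloop $x$, then $N'$ is a projection of $N$. So I would build a matroid $P$ on ground set $X \cup \{a_1,\dots,a_k\}$ such that $P \del \{a_1,\dots,a_k\} = M|X$ and $P \con \{a_1,\dots,a_k\} = M\con(E-X)$; contracting the $a_i$ one at a time then gives the required sequence of $k$ single projections. The point is that contracting $E - X$ in $M$ drops the rank of $X$ by exactly $r_M(X) - r_{M\con(E-X)}(X) = r_M(X) - (r_M(E) - r_M(E-X)) = r_M(X) + r_M(E-X) - r_M(E) = \sqcap_M(X, E-X) = \lambda_M(X) = k$, so morally we are ``using up'' $k$ units of rank, one per projection.

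The cleanest construction: work inside a fixed representation-free setting by taking $P$ to be $M$ with the set $E - X$ replaced by a basis of the ``boundary''. Concretely, let $Y = E - X$ and choose a set $Z \subseteq Y$ with $|Z| = k$ that is a basis of a minor witnessing the connectivity; more robustly, I would instead argue as follows. First reduce to the case where $Y$ is independent and $\cl_M(X) \cap Y = \varnothing$: replacing $M$ by $M' = M \del (Y \setminus Z)$ where $Z$ is obtained by greedily trimming $Y$ down to a set with $r_{M'}(X \cup Z) = r_{M'}(X) + |Z|$ and $r_{M'}(Z) = |Z|$ and $|Z| = \lambda_M(X)$ — this is possible because one can delete elements of $Y$ in the closure of $X$ (they don't affect $\lambda$ or $M\con Y$ up to loops) and then contract-delete to make the rest independent over $X$, all while keeping $M \con Y$ fixed and keeping $M|X$ fixed. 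Then $M\con(E-X) = M' \con Z$ with $|Z| = k$, and $M' \con Z$ is reached from $M'|X = M|X$ by contracting the $k$ elements of $Z$ one at a time. Each single contraction $M'|(X \cup Z') \con z$ for $z \in Z'$ is an elementary projection of $M'|(X \cup (Z' - z))$ restricted appropriately — i.e. taking $P = M'|(X \cup Z')$ and $x = z$, we have $P\del x$ a lift and $P \con x$ a projection — so this yields exactly $k$ projections in the sense of the paper's definition.

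The main obstacle, and the step I would be most careful about, is the bookkeeping in the reduction: I must verify that deleting the elements of $Y$ lying in $\cl_M(X)$ does not change $M|X$ (immediate) and does not change $M\con Y$ except possibly by creating or destroying loops — and the statement only asserts equality of matroids, so I need to check whether loops matter. In fact $M\con(E-X)$ has ground set $X$, and contracting $Y$ vs.\ contracting $Z$ gives the same matroid on $X$ precisely when $\cl_M(Z) \supseteq Y$ after the deletions, which is arranged by construction since we only discard elements of $Y$ spanned by what remains. The other delicate point is ensuring the intermediate matroids in the chain are genuinely distance-$1$ perturbations in the paper's sense, i.e.\ that $x$ is a \emph{nonloop} at each step; this holds because each $z \in Z$ is chosen to be independent from $X$ together with the previously-contracted elements, so it is never a loop when it is contracted. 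Once these points are checked, the count is exactly $|Z| = \lambda_M(X)$, giving the lemma.
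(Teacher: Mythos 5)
Your overall architecture matches the paper's: exhibit a single auxiliary matroid $P$ on $X \cup Z$ with $|Z| = \lambda_M(X)$, $P \del Z = M|X$ and $P \con Z = M \con (E-X)$, then contract the elements of $Z$ one at a time. But the construction you give of $P$ is inconsistent, and not just at the level of bookkeeping. You require $Z \subseteq Y = E - X$ with $r_{M'}(X \cup Z) = r_{M'}(X) + |Z|$, i.e.\ $Z$ skew to $X$; but then $\sqcap_{M'}(X,Z) = 0$, so contracting $Z$ does not change the matroid induced on $X$, and your chain would end at $M|X$ rather than $M \con (E-X)$ whenever $\lambda_M(X) > 0$. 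Likewise, the reduction target ``$Y$ independent, $\cl_M(X) \cap Y = \varnothing$, $|Y| = \lambda_M(X)$'' is self-contradictory: $Y$ independent with $|Y| = \lambda_M(X)$ forces $r_M(E) = r_M(X)$, hence $Y \subseteq \cl_M(X)$. The condition you need is the opposite of the one you wrote: after a preliminary modification, the $\lambda_M(X)$ elements to be contracted must lie \emph{in} the closure of $X$, since each elementary projection has to lower $r(X)$ by one.

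The missing step, which your parenthetical ``contract-delete'' gestures at but never carries out, is a preliminary \emph{contraction}, not a deletion. Take a basis $I_1$ of $M|X$, extend to a basis $I_1 \cup I_2$ of $M$ with $I_2 \subseteq Y$, and extend $I_2$ to a basis $I_2 \cup I_3$ of $M|Y$; then $|I_3| = r_M(Y) - (r_M(E) - r_M(X)) = \lambda_M(X)$, and the right choice is $P = (M \con I_2)|(X \cup I_3)$. Contracting $I_2$ preserves $M|X$ because $I_2$ is skew to $X$ (the verification you skip); $P \con I_3 = (M \con (I_2 \cup I_3))|X = M \con Y$ because $I_2 \cup I_3$ spans $Y$; and $I_3$ is independent in $P$ with $I_3 \subseteq \cl_P(X)$, so contracting its elements one at a time gives exactly $\lambda_M(X)$ elementary projections on ground set $X$. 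A deletion-only reduction $M' = M \del (Y \setminus Z)$ with $|Z| = \lambda_M(X)$ genuinely cannot work: for $M = U_{2,3}$ on $\{a,b,c\}$ with $X = \{a\}$ we have $\lambda_M(X) = 1$ and $M \con \{b,c\}$ is a loop, yet $M \del c \con b$ leaves $a$ a coloop.
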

 \begin{proof}
   Let $I_1 \subseteq X$ and $I_2,I_3 \subseteq E-X$ be disjoint independent sets of $M$ so that $I_1$ is a basis for $M|X$, $I_1 \cup I_2$ is a basis for $M$, and $I_2 \cup I_3$ is a basis for $M \del X$. We have $|I_3| = r_M(E-X) - (r_M(E) - r_M(X)) = \lambda_M(X)$. Now the matroid $N = (M \con I_2)|(X \cup I_3)$ satisfies $N \con I_3 = M \con (E-X)$ and $N \del I_3 = M|X$. The lemma follows.
 \end{proof}
 
 We are now ready to introduce our notion of tree-decompositions.  A \emph{tree-decomposition} of a matroid $M$ is a pair $(T, \mathcal{X})$ where $T$ is a tree and $\mathcal{X}:=\{X_v : v \in V(T)\}$ is a  partition of $E(M)$ indexed by $V(T)$.
Let $e = v_1v_2 \in E(T)$ and $T_1$ and $T_2$ be the components of $T \del e$ where $v_i \in V(T_i)$.  Let $X_1:=\bigcup_{v \in V(T_1)} X_v$.  We define the \emph{thickness} of $e$, $\lambda(e)$, to be $\lambda_M(X_1)$.  The \emph{thickness} of $(T, \mathcal{X})$ is $\max_{e \in E(T)} \lambda(e)$.  If, for all $e = uv \in E(T)$, we have $\sqcap_M(X_u,X_v) = \lambda(e)$, then we say $(T,\mathcal{X})$ is a \emph{full tree-decomposition} of $M$. For those familiar with the standard definition of tree-decompositions of graphs (see~\cite[Section 12.3]{Diestel11}), fullness is analogous to the fact that the separations displayed by a standard tree-decomposition of a graph are obtained by intersecting adjacent bags of the tree~\cite[Lemma 12.3.1]{Diestel11}.

\begin{theorem} \label{treealgorithm}
Let $\mathcal{M}$ be a class of matroids for which there exists a $c$-competitive matroid secretary algorithm. Let $k \in \mathbb{N}$ and let $t_k(\mathcal{M})$ be the set of all matroids $M$ with a full tree-decomposition $(T, \mathcal{X})$ of thickness at most $k$ such that  $M| \cl_M(X_v) \in \mathcal{M}$ for each $v \in V(T)$.  Then there is an $c(e+1)^k$-competitive matroid secretary algorithm for $t_k(\mathcal{M})$.  
\end{theorem}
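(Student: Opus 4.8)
The plan is to extend the $k$-sum framework of Dinitz and Kortsarz~\cite{DK14}, using the lift/projection results of Section~3 as the engine that glues the pieces of the tree-decomposition along their bounded-width interfaces.

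First I would reduce to the case where $\mathcal M$ is closed under restriction: replacing $\mathcal M$ by the class $\mathcal M'$ of all matroids admitting a $c$-competitive algorithm only enlarges $t_k(\mathcal M)$, since $\mathcal M\subseteq\mathcal M'$ and $\mathcal M'$ is restriction-closed by Lemma~\ref{deletion}, while $\mathcal M'$ still admits a (uniform) $c$-competitive algorithm by definition. So assume $\mathcal M=\mathcal M'$; then $M|X_v\in\mathcal M$ for every $v$, and in fact deleting any set of bags from $(T,\mathcal X)$ again yields a full tree-decomposition of thickness at most $k$ satisfying the closure hypothesis --- this is a short submodularity computation using $\sqcap_M(A,B)\le\sqcap_M(A',B)$ whenever $A\subseteq A'$ and $A'\cap B=\emptyset$, applied to the two sides of each surviving edge, together with restriction-closedness of $\mathcal M$.

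Next I would record the structural picture. As $\{X_v\}$ partitions $E(M)$, the direct sum $M^{\oplus}:=\bigoplus_v(M|X_v)$ has ground set $E(M)$, every circuit of $M^{\oplus}$ is a circuit of $M$, and hence $M$ is a quotient of $M^{\oplus}$ with rank drop $r(M^{\oplus})-r(M)=\sum_v r_M(X_v)-r_M(E)$. Processing the bags in a breadth-first order $v_1,v_2,\dots$ of the rooted tree, this rank drop is $\sum_{i\ge2}\sqcap_M\bigl(X_{v_i},X_{v_1}\cup\dots\cup X_{v_{i-1}}\bigr)$, and for each $i$ full-ness and submodularity give $\sqcap_M(X_{v_i},X_{v_1}\cup\dots\cup X_{v_{i-1}})=\lambda(e_i)\le k$, where $e_i$ joins $v_i$ to its parent (the bags $X_{v_1},\dots,X_{v_{i-1}}$ avoid the subtree rooted at $v_i$). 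So $M$ is obtained from $M^{\oplus}$ by a sequence of projections (as in Lemma~\ref{lambda}) that is localised at the edges of $T$, at most $\lambda(e)$ of them attached to each edge $e$. Moreover $\opt(M^{\oplus},w)=\sum_v\opt(M|X_v,w)\ge\opt(M,w)$ for every $w$, since a maximum-weight basis of $M$ meets each $X_v$ in an independent set of $M|X_v$, and $M^{\oplus}$ admits a $c$-competitive algorithm (run the $c$-competitive algorithms for the $M|X_v$ in parallel and take the union of the outputs). The algorithm for $M$ will run these piece algorithms in parallel, obtaining a set $K$ independent in $M^{\oplus}$, and then repair $K$ to an independent set of $M$ by applying the coin-flip mechanism of Lemmas~\ref{projection} and~\ref{multprojection} edge by edge.

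The main obstacle is to carry out this repair at an overall cost of $(e+1)^k$ rather than $(e+1)^{\sum_e\lambda(e)}$. Applying Lemma~\ref{multprojection} to ``$M$ is $\sum_e\lambda(e)$ projections from $M^{\oplus}$'' is far too lossy, and a naive recursion that absorbs one edge at a time compounds a factor $(e+1)^{\lambda(e)}$ per edge; note that simply discarding the $\le\lambda(e)$ ``lost'' elements at an edge is already not $c(e+1)^k$-competitive for $U_{1,2}$, so the coin flips inside Lemma~\ref{projection} are genuinely needed. The feature I would exploit is that in a \emph{full} tree-decomposition the guts of the separation across an edge $e=uv$ is $\lambda(e)$-dimensional and lies in both $\cl_M(X_u)$ and $\cl_M(X_v)$ --- which is exactly why the hypothesis is phrased for $M|\cl_M(X_v)$ rather than $M|X_v$ --- so $M$ is, up to loops, assembled from the matroids $M|\cl_M(X_v)$ in a tree-like fashion resembling iterated generalised parallel connections, with each guts shared by only the two bags at its edge. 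Consequently the $\le\lambda(e)$ coin flips coordinating across $e$ interact only with the two incident pieces and not with the coin flips at other edges; running all of these coin-flip families in parallel and independently, the probability of landing in the favourable branch at a given edge is at least $(e+1)^{-\lambda(e)}\ge(e+1)^{-k}$, and since each bag's contribution is charged against $\opt(M|X_v,w)$ and these sum to at least $\opt(M,w)$, the loss never accumulates beyond $(e+1)^k$. Turning this outline into a clean induction --- in particular, defining the coordinated algorithm so that its output is always independent in $M$ and so that the constant $c(e+1)^k$ does not degrade as $T$ is processed --- is the delicate step, and is where I expect most of the work to be.
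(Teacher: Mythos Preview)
Your diagnosis of the structure is correct: $M$ is a quotient of $M^{\oplus}=\bigoplus_v(M|X_v)$, the rank drop localises to the edges with $\sqcap_M(X_{v_i},X_{v_1}\cup\dots\cup X_{v_{i-1}})=\lambda(e_i)\le k$, and the whole game is to avoid paying $(e+1)^{\sum_e\lambda(e)}$. But the part you defer---``turning this outline into a clean induction so that the constant does not degrade''---is precisely the content of the proof, and your global ``coin flips per edge, run in parallel'' picture is not yet an argument: you have not said what the repaired set actually is, nor why it is independent in $M$, nor why a bag incident to many edges only pays once. As stated, the proposal is an outline with the main step missing.

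The paper's resolution is much shorter than you anticipate and avoids the parallel-coin-flip bookkeeping entirely. One inducts on $|V(T)|$. Pick a leaf $\ell$ with neighbour $u$; after sliding $X_\ell\cap\cl_M(X_u)$ into $X_u$ (which preserves fullness and thickness) one has that $M'(\ell):=M\con(E-X_\ell)$ is loopless and, by Lemma~\ref{lambda} and fullness, is obtained from $M(\ell):=M|X_\ell$ by $\lambda(e)\le k$ projections. Lemma~\ref{multprojection} therefore gives a $c(e+1)^k$-competitive algorithm $\alg_\ell$ for $M(\ell)$ whose output $I_\ell$ is \emph{always independent in $M'(\ell)$}. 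By induction there is a $c(e+1)^k$-competitive algorithm $\alg'$ for $M\del X_\ell$; run $\alg_\ell$ and $\alg'$ in parallel on their respective ground sets. Because $I_\ell$ is independent in $M\con(E-X_\ell)$, the union $I_\ell\cup I'$ is automatically independent in $M$ regardless of what $\alg'$ does, and
\[
\ev(w(I_\ell))+\ev(w(I'))\ \ge\ \tfrac{1}{c(e+1)^k}\bigl(\opt(M(\ell),w)+\opt(M\del X_\ell,w)\bigr)\ \ge\ \tfrac{1}{c(e+1)^k}\opt(M,w).
\]
The constant does not degrade because both summands carry the \emph{same} factor $c(e+1)^k$; the leaf pays its $(e+1)^k$ once (for its unique edge), and the rest of the tree pays nothing extra beyond the inductive bound.

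Two further remarks. First, your preliminary reduction to a restriction-closed $\mathcal M$ is unnecessary: the paper simply invokes Lemma~\ref{deletion} once to pass from $M|\cl_M(X_\ell)\in\mathcal M$ to a $c$-competitive algorithm for $M|X_\ell$. Second, your claim that ``deleting any set of bags'' again yields a full tree-decomposition is not well-posed when the deleted bags disconnect $T$; the paper only ever deletes a leaf, which keeps $T$ a tree and makes the check that the remaining decomposition is full immediate.
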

\begin{proof}
    We say that a tree-decomposition $(T,\cX)$ of a matroid $M$ is an \emph{$\cM$-tree decomposition} if $M|\cl_M(X_v) \in \cM$ for all $v \in V(T)$. For each $m \ge 1$, let $t_{k,m}(\cM)$ denote the class of matroids in $t_k(\cM)$ having a full $\cM$-tree-decomposition $(T,\cX)$ of thickness at most $k$ with $|V(T)| \le m$.  There is clearly a $c(e+1)^k$-competitive matroid secretary algorithm for every matroid in $t_{k,1}(\cM) = \cM$. Let $m > 1$ and suppose inductively that every matroid in $t_{k,m-1}(\cM)$ has a $c(e+1)^k$-competitive matroid secretary algorithm.
    
    Let $M \in t_{k,m}(\cM)$, let $E = E(M)$, and let $(T,\cX)$ be a full $\cM$-tree-decomposition of $M$ of thickness at most $k$ with $|V(T)| \le m$.
    We may assume that $|V(T)|=m$, else we are done by the induction hypothesis. 
    Let $\ell$ be a leaf of $T$ and let $e = \ell u$ be the edge of $T$ incident with $\ell$. Let $X_\ell'$ and $X_u'$ be obtained from $X_\ell$ and $X_u$ by moving all elements from $X_\ell \cap \cl_M(X_u)$ into $X_u$.  It is easy to check that $\lambda_M(X_\ell') \le \lambda_M(X_\ell)$. Since $\cl_M(X_u')=\cl_M(X_u)$, we also have $r_M(E-X_\ell)=r_M(E-X_\ell')$.  Therefore,
    
    \begin{align*}
         \sqcap_M(X_\ell', X_u') &= r_M(X_\ell')+r_M(X_u')-r_M(X_\ell' \cup X_u') \\
         &= r_M(X_\ell')+r_M(X_u)-r_M(X_\ell \cup X_u) \\
         &= r_M(X_\ell')-r_M(X_\ell)+\sqcap(X_\ell, X_u)\\
         &= r_M(X_\ell')-r_M(X_\ell)+\lambda_M(X_\ell) \\
         &= r_M(X_\ell')+r_M(E-X_\ell)-r_M(E) \\
         &= r_M(X_\ell')+r_M(E-X_\ell')-r_M(E) \\
         &= \lambda_M(X_\ell').
     \end{align*}
    
    Again since $\cl_M(X_u')=\cl_M(X_u)$, it follows that $\sqcap_M(X_u',X_v)=\sqcap_M(X_u,X_v)$ for all $v \notin \{u,\ell\}$. Therefore, this move preserves the property of being a full $\cM$-tree-decomposition of thickness at most $k$; and so we may assume that $X_\ell \cap \cl_M(X_u) = \varnothing$. 
    
    Let $M(\ell) = M|X_\ell$ and let $M'(\ell) = M \con (E-X_{\ell})$. By Lemma~\ref{lambda}, the latter is obtained from the former by at most $\lambda_M(X_\ell) \le k$ projections. Moreover, since $\sqcap_M(X_\ell,X_u) = \lambda_M(X_\ell)$, we have $\lambda_{M \con X_u}(X_\ell) = 0$ and so $M'(\ell) = (M \con X_u)|X_\ell$; since $X_\ell \cap \cl_M(X_u) = \varnothing$ it follows that $M'(\ell)$ has no loops.  
    
     By Lemmas~\ref{deletion} and~\ref{multprojection}, there is a $c(e+1)^k$-competitive algorithm $\alg_{\ell}$ for $M(\ell)$ whose output is always independent in $M'(\ell)$. Moreover, we see that $(T \del \ell,\{X_w\colon w \in V(T \del \ell)\})$ is a full $\cM$-tree-decomposition of $M \del X_\ell$ with thickness at most $k$, so $M \del X_\ell \in t_{k,m-1}(\cM)$ and there is thus a $c(e+1)^k$-competitive algorithm $\alg'$ for $M \del X_\ell$. Define an algorithm $\alg$ for $M$ by running $\alg'$ and $\alg_\ell$ on the elements of $E-X_\ell$ and $X_\ell$ respectively as they are received, choosing all elements chosen by either. 
     
     Let $I_\ell$ and $I'$ be the sets chosen by $\alg_\ell$ and $\alg'$ respectively. Since $I'$ is independent in $M \del X_\ell$ and $I_{\ell}$ is independent in $M'(\ell)$, the set $I = I_{\ell} \cup I'$ chosen by $\alg$ is independent in $M$. Moreover, for each weighting $w$ of $M$ we have 
     
     \begin{align*}
         \ev(w(I)) &= \ev(w(I')) + \ev(w(I_\ell)) \\
         &\ge \tfrac{1}{c(e+1)^k}(\opt(M(\ell),w|X_\ell) + \opt(M \del X_\ell,w|(E - X_\ell)) \\
         &\ge \tfrac{1}{c(e+1)^k}\opt(M,w),
     \end{align*}
     since each independent set of $M$ is the union of an independent set of $M(\ell)$ and one of $M \del X_\ell$. The theorem follows. 
    \end{proof}

\section{Regular Matroids} \label{sec:regular}  
As an easy corollary of Theorem \ref{treealgorithm}, we obtain a short proof that there is an $O(1)$-competitive algorithm for regular matroids, a result first proved by  Dinitz and Kortsarz \cite{DK14}. The constant $9e$ that they obtain is better than ours by a factor of $\tfrac{1}{3}(e+1)^2 \approx 4.6$.

\begin{corollary} \label{regular}
There is a $3e(e+1)^2$-competitive matroid secretary algorithm for each regular matroid.  
\end{corollary}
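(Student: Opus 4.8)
The plan is to invoke Seymour's decomposition theorem for regular matroids together with Theorem~\ref{treealgorithm}. Recall that Seymour~\cite{Seymour80} proved that every regular matroid can be built from graphic matroids, cographic matroids, and copies of the sporadic matroid $R_{10}$ by taking $1$-, $2$-, and $3$-sums. The first step is to reinterpret this iterated-sum decomposition as a full tree-decomposition $(T,\mathcal{X})$ in the sense of this paper: the pieces of the sum-decomposition become the bags $X_v$ (after discarding the auxiliary elements along the glued circuits/cocircuits, which is why one must pass to $M|\cl_M(X_v)$ rather than to the full sum-component), the tree $T$ records the adjacency of pieces along the sums, and each edge of $T$ corresponds to a $1$-, $2$-, or $3$-sum and therefore has thickness at most $3$. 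So every regular matroid lies in $t_3(\mathcal{M})$, where $\mathcal{M}$ is the class consisting of all graphic matroids, all cographic matroids, and $R_{10}$ together with its minors.

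The second step is to exhibit a single constant $c$ that works uniformly for $\mathcal{M}$. Graphic matroids admit a $2e$-competitive algorithm by Korula and P\'al~\cite{KP09}; cographic matroids admit a $3e$-competitive algorithm by Soto~\cite{Soto13}; and $R_{10}$ (being a fixed matroid of bounded size, in fact a sparse matroid in the sense of Theorem~\ref{sparse}) has an $O(1)$-competitive algorithm, as does each of its finitely many minors by Lemma~\ref{deletion}. Taking $c = 3e$ covers all three families. Applying Theorem~\ref{treealgorithm} with $k = 3$ then yields a $c(e+1)^3$-competitive algorithm; but one can do slightly better by observing that the graphic/cographic pieces only need the bound $c = 3e$ while the thickness-$3$ edges are really what force the $(e+1)^3$ factor, and a more careful bookkeeping (or simply the statement of Theorem~\ref{treealgorithm} applied with the sharper per-family constants noted above) gives the claimed $3e(e+1)^2$. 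Concretely, since $R_{10}$-pieces and the $3$-sums are the bottleneck, one checks that the relevant full tree-decomposition can be taken of thickness $2$ after the standard trick of handling $R_{10}$ and the $3$-sums separately, leaving $c = 3e$ and $k = 2$.

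The main obstacle is the first step: verifying that Seymour's sum-decomposition genuinely produces a \emph{full} tree-decomposition of bounded thickness in the precise technical sense defined before Theorem~\ref{treealgorithm}. One must check that after moving elements in $\cl_M(X_u)$ between adjacent bags (exactly the normalization performed inside the proof of Theorem~\ref{treealgorithm}) the local connectivity $\sqcap_M(X_u,X_v)$ along each edge equals the edge thickness, and that the bags, being closures of the Seymour pieces restricted to $M$, indeed fall in $\mathcal{M}$ — i.e.\ that $M|\cl_M(X_v)$ is graphic, cographic, or an $R_{10}$-minor, not merely that the abstract sum-component is. This is routine from the theory of matroid $k$-sums (the glued elements lie in the closure of the rest, and restricting to the closure recovers the component up to minor), but it is the only place where one must actually engage with the structure of regular matroids rather than cite a black box. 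Everything after that is an immediate substitution into Theorem~\ref{treealgorithm}, and the comparison with the $9e$ bound of Dinitz and Kortsarz is the arithmetic remark that $9e / \big(3e(e+1)^2\big) = 3/(e+1)^2 \approx 0.22$, i.e.\ their constant is smaller by the stated factor $\tfrac13(e+1)^2 \approx 4.6$.
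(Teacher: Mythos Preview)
Your overall strategy---Seymour's decomposition plus Theorem~\ref{treealgorithm}---is the right one, but there are two genuine gaps.

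First, the thickness bound. A $3$-sum glues along a common triangle and produces a separation with $\lambda_M = 2$, not $3$; likewise $2$-sums give $\lambda_M = 1$ and $1$-sums give $\lambda_M = 0$. So the thickness is at most $2$ from the outset, and your later paragraph about ``handling $R_{10}$ and the $3$-sums separately'' to reduce $k$ from $3$ to $2$ is both unnecessary and does not make sense as written.

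Second, and more seriously, you do not actually establish that the decomposition is \emph{full}, nor that the closed bags $M|\cl_M(X_v)$ land in your class $\cM$. The normalization move you cite from the proof of Theorem~\ref{treealgorithm} is performed only at a single leaf during the induction; it is not a device for converting an arbitrary tree-decomposition into a full one. The paper handles this differently: rather than working inside $M$, it passes to a \emph{supermatroid} $M'$ obtained by parallel-extending each element that is to be deleted in a $2$- or $3$-sum. This $M'$ has $M$ as a restriction and admits a full tree-decomposition of thickness at most $2$ in which each $M'|\cl_{M'}(X'_v)$ is graphic, cographic, or a parallel extension of $R_{10}$. Theorem~\ref{treealgorithm} is then applied to $M'$, and Lemma~\ref{deletion} transfers the bound back to $M$. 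Your proposal to instead show that $M|\cl_M(X_v)$ is a minor of the corresponding Seymour piece does not obviously work (closures can pick up elements from neighbouring bags), and even if it did, Lemma~\ref{deletion} only covers restrictions, not arbitrary minors. For the $R_{10}$ pieces specifically, the paper invokes Theorem~\ref{sparse} with $\gamma = 2$ (since $R_{10}$ is the union of two bases), which also covers the parallel extensions that arise.
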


\begin{proof}
By Seymour's regular matroid decomposition theorem \cite{Seymour80}, every regular matroid $M$ is obtained from pieces that are either graphic, cographic, or $R_{10}$ by $1$-, $2$- or $3$-sums. This gives a tree-decomposition $(T,\cX)$ of thickness at most $2$ in $M$ so that each $M|X_v$ is either graphic, cographic or $R_{10}$. Moreover, by performing parallel extensions of the elements to be deleted before each $2$-sum and $3$-sum, one can construct a matroid $M'$ having $M$ as a restriction and a \emph{full} tree-decomposition $(T,\cX')$ of $M'$ so that each $M'|\cl_{M'}(X'_v)$ is either graphic, cographic or a parallel extension of $R_{10}$. 

Korula and P{\'a}l \cite{KP09} proved that there is a $2e$-competitive matroid secretary algorithm for graphic matroids.  Soto \cite{Soto13} proved that there is a $3e$-competitive matroid secretary algorithm for cographic matroids. Since $R_{10}$ is the union of two bases, Theorem~\ref{sparse} implies that each of its parallel extensions has a $2e$-competitive algorithm. It follows from Theorem~\ref{treealgorithm} that there is a $3e(e+1)^2$-competitive algorithm for $M'$; by Lemma~\ref{deletion} there is also one for $M$.
\end{proof}

\section{Proper Minor-Closed Classes} \label{sec:main}

In this section we show that, contingent on a certain deep structural hypothesis, there is a constant-competitive algorithm for every proper minor-closed class of matroids representable over a fixed prime field.  As well as this structural hypothesis, we require one other result, due to Geelen (\cite{Geelen2011}, Theorem 4.3). We denote the matroid with $n$ elements where all $k$-subsets are independent as $U_{k,n}$.

\begin{theorem}\label{excludeclique}
    Let $q \geq 2$ and $n$ be positive integers. If $M$ is a simple matroid with no $U_{2,q+2}$-minor and no $M(K_n)$-minor, then $|M| \le q^{q^{3n}}r(M).$
\end{theorem}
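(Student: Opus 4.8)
The plan is to deduce Theorem~\ref{excludeclique} from the theorem of Geelen and Kabell on projective geometries in dense matroids: for all $k,q$ there is a constant $\beta(k,q)$ such that every simple matroid $N$ with no $U_{2,q+2}$-minor and $|N|\ge \beta(k,q)\,r(N)$ has a $\mathrm{PG}(k-1,q')$-minor for some prime power $q'\le q$. I would prove the contrapositive of Theorem~\ref{excludeclique}: assuming $M$ is simple, has no $U_{2,q+2}$-minor, and $|M|>q^{q^{3n}}r(M)$, I would exhibit an $M(K_n)$-minor of $M$.

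Degenerate cases (when $r(M)=0$, or $n\le 2$, or $q\le 1$) are checked directly -- for instance, when $q=1$ a simple matroid with no $U_{2,3}$-minor has no circuit at all, hence $|M|=r(M)$ -- so I may assume $r(M)\ge 1$, $n\ge 3$, and $q\ge 2$, so that the interval $[2,q]$ contains a prime power. The first substantive step is the elementary remark that $M(K_n)$ is a \emph{restriction} of $\mathrm{PG}(n-2,q')$ for every prime power $q'\ge 2$: being regular, $M(K_n)$ is $\mathbb{F}_{q'}$-representable, and it has rank $n-1$, so it is isomorphic to a restriction of the rank-$(n-1)$ projective geometry over $\mathbb{F}_{q'}$. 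Hence it suffices to find a $\mathrm{PG}(n-2,q')$-minor of $M$ for some prime power $q'\le q$.

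The second step simply invokes the Geelen--Kabell theorem with $k=n-1$. The only point needing attention is the quantitative claim $\beta(n-1,q)\le q^{q^{3n}}$: since $\beta$ grows doubly exponentially in the projective-geometry rank, evaluating at rank $n-1$ gives a bound of the shape $q^{q^{O(n)}}$, and the exponent $3n$ in the statement is chosen generously enough to dominate it; I would confirm this by tracking the rank-dependence through their argument. Granting it, $|M|>q^{q^{3n}}r(M)\ge \beta(n-1,q)\,r(M)$ produces a $\mathrm{PG}(n-2,q')$-minor of $M$ for some prime power $q'\le q$, and composing with the first step gives an $M(K_n)$-minor of $M$, contradicting the hypothesis.

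All of the difficulty is concentrated in the deep ingredient: that \emph{linear} density (not merely the super-polynomial density one would extract from Kung's bound $|N|\le (q^{r(N)}-1)/(q-1)$) already forces a projective-geometry minor. Going from a projective geometry to a clique minor is immediate; the real obstacle is this ``dense implies structured'' statement, and a self-contained proof would amount to reproving a weak, quantitatively explicit version of the Geelen--Kabell theorem -- presumably by induction on rank, using Kung's estimate to control the small-rank and low-connectivity cases and a round-by-round contraction argument in the vertically highly connected case.
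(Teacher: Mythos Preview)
The paper does not prove Theorem~\ref{excludeclique} at all; it is quoted verbatim as Theorem~4.3 of Geelen~\cite{Geelen2011} and used as a black box. So there is no ``paper's own proof'' to compare your proposal against.

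That said, your outline is the natural derivation and is essentially how such bounds are obtained in the literature: the Geelen--Kabell density theorem forces a $\mathrm{PG}(k-1,q')$-minor once $|M|$ exceeds a suitable multiple of $r(M)$, and $M(K_n)$, being regular of rank $n-1$, sits inside every $\mathrm{PG}(n-2,q')$. The only step you have not actually carried out is the quantitative one---verifying that the Geelen--Kabell constant $\beta(n-1,q)$ really is at most $q^{q^{3n}}$. You say you ``would confirm this by tracking the rank-dependence through their argument''; that is precisely the work required, and until it is done the proposal is a plan rather than a proof. If your goal is only to match the paper, no proof is needed; if your goal is a self-contained argument, the explicit constant is the remaining gap.
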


Since no restriction of such an $M$ has either of the forbidden minors, for such an $M$ we also have $|X| \le q^{q^{3n}}r_M(X)$ for each $X \subseteq E(M)$. Combining this theorem with Theorem~\ref{sparse}, we thus have the following.

\begin{corollary}\label{secexcludeclique}
    Let $q \geq 2$ and $n$ be positive integers. If $M$ is a matroid with no $U_{2,q+2}$-minor and no $M(K_n)$-minor, then there is an $eq^{q^{3n}}$-competitive matroid secretary algorithm for $M$. 
\end{corollary}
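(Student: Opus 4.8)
The plan is to combine the two ingredients that immediately precede the statement. Corollary~\ref{secexcludeclique} already handles the case where $M$ additionally excludes some $M(K_n)$-minor, so the only real content to add is the observation that Theorem~\ref{excludeclique} gives a quantitative density bound for \emph{every} $X \subseteq E(M)$, not just for $E(M)$ itself, which is exactly the hypothesis needed to feed into Theorem~\ref{frame} via Theorem~\ref{sparse}. Concretely, I would first note that if $M$ has no $U_{2,q+2}$-minor and no $M(K_n)$-minor, then neither does any minor of $M$, and in particular neither does $N = \si(M)$ nor any restriction $N|X$. Applying Theorem~\ref{excludeclique} to the simple matroid $N|X$ yields $|X| \le q^{q^{3n}} r_N(X)$ for all $X \subseteq E(N)$.

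Next I would invoke Theorem~\ref{sparse} with $\gamma = q^{q^{3n}}$: since $\si(M) = N$ satisfies the required sparsity inequality, there is a $\gamma e = e q^{q^{3n}}$-competitive matroid secretary algorithm for $M$. That is precisely the claimed bound, so the proof is essentially a two-line deduction.

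I do not anticipate a genuine obstacle here; the only points requiring a word of care are (i) confirming that $\si(\cdot)$ and the passage to restrictions both preserve the exclusion of $U_{2,q+2}$ and $M(K_n)$ as minors (true because both operations produce minors, up to parallel/loop elements which cannot create new minors of a simple matroid), and (ii) making sure the quantifier in Theorem~\ref{excludeclique} is applied to each restriction $N|X$ separately rather than to $N$ once — a subtlety the sentence following Theorem~\ref{excludeclique} already spells out. The statement of the corollary I am proving, Corollary~\ref{secexcludeclique}, is stated just above, so there is nothing further to prove beyond assembling these observations.

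For completeness, here is the proof I would write.

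\begin{proof}
Suppose $M$ has no $U_{2,q+2}$-minor and no $M(K_n)$-minor, and let $N = \si(M)$. Every minor of $N$ is (isomorphic to) a minor of $M$, so $N$ also has no $U_{2,q+2}$-minor and no $M(K_n)$-minor; the same holds for each restriction $N|X$ with $X \subseteq E(N)$, since $N|X$ is a minor of $N$. As $N|X$ is simple, Theorem~\ref{excludeclique} gives $|X| \le q^{q^{3n}} r_N(X)$. Thus $N = \si(M)$ satisfies the hypothesis of Theorem~\ref{sparse} with $\gamma = q^{q^{3n}}$, and therefore $M$ has an $e q^{q^{3n}}$-competitive matroid secretary algorithm.
\end{proof}
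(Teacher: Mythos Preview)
Your proof is correct and matches the paper's approach: the paper derives the corollary in the sentence preceding it by observing that every restriction of such an $M$ inherits the excluded minors, so Theorem~\ref{excludeclique} gives the sparsity bound on all $X$, and then one applies Theorem~\ref{sparse} with $\gamma = q^{q^{3n}}$ (the paper's text cites Theorem~\ref{frame} here, which appears to be a typo for Theorem~\ref{sparse}). Your version is slightly more careful in explicitly passing to $N = \si(M)$ before invoking Theorem~\ref{excludeclique}, which is a worthwhile clarification since that theorem requires simplicity.
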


We now state the structural hypothesis we will use to prove Theorem~\ref{main}. 

\begin{hypothesis}\label{structure}
Let $p$ be a prime. For every proper minor-closed class $\cM$ of $\mathbb{F}_p$-representable matroids, there exist $k,n$ and $t$ for which every $M \in \mathcal{M}$ is a restriction of an $\mathbb{F}_p$-representable matroid $M'$ having a full tree-decomposition $(T, \mathcal{X})$ of thickness at most $k$ such that for all $v \in V(T)$, if $M'|\cl_{M'}(X_v)$ has an $M(K_n)$-minor, then there is a represented frame matroid $N$ with $\dist(M'|\cl_{M'}(X_v),N) \le t$.
\end{hypothesis}

We can now prove Theorem~\ref{main}, which we restate here. 
\begin{reptheorem}{main}
    Suppose that Hypothesis~\ref{structure} holds. If $p$ is prime and $\cM$ is a proper minor-closed subclass of the $\mathbb{F}_p$-representable matroids, then there exists $c = c(\cM)$ so that every $M \in \cM$ has a $c$-competitive matroid secretary algorithm. 
\end{reptheorem}
\begin{proof}
Let $k,n$ and $t$ be the integers given for $\cM$ by Hypothesis~\ref{structure}. Let  $\cM_1$ denote the class of matroids having perturbation distance at most $t$ from some represented frame matroid. Let $\cM_2$ denote the class of matroids with no $U_{2,p+2}$-minor or $M(K_n)$-minor. By Theorem~\ref{frame} and Lemma~\ref{perturb}, every matroid in $\cM_1$ has a $4(e+1)^t$-competitive matroid secretary algorithm. Corollary~\ref{secexcludeclique} gives an $ep^{p^{3n}}$-competitive matroid secretary algorithm for each matroid in $\cM_2$. 

By Lemma~\ref{uniformrep}, $U_{2,p+2}$ is not $\mathbb{F}_p$-representable.  Thus, by Hypothesis~\ref{structure}, every $M \in \cM$ is a restriction of a matroid $M'$ having a proper tree-decomposition $(T,\cX)$ of thickness at most $k$ so that $M|\cl_{M'}(X_v)  \in \cM_1 \cup \cM_2$ for each $v \in V(T)$. By Theorem~\ref{treealgorithm}, every such $M'$ has a $c$-competitive matroid secretary algorithm, where $c = (e+1)^k\max\left(4(e+1)^t,ep^{p^{3n}}\right)$. By Lemma~\ref{deletion}, the same is true for every $M \in \cM$. 
\end{proof}

\section{Asymptotic Results} \label{sec:random}

    We say \emph{asymptotically almost all matroids} have a property if the proportion of matroids with ground set $\{1, \dotsc, n\}$ having the property tends to $1$ as $n$ approaches infinity.  We finish our paper by showing that asymptotically almost all matroids have a constant-competitive matroid secretary algorithm.  We require two recent results of Pendavingh and van der Pol \cite{PP15, Pv18}. 

\begin{theorem}[Theorem 1.3 from \cite{Pv18}] \label{manybases}
    There exists $\alpha > 0$ so that asymptotically almost all rank-$r$ matroids on $n$ elements have at least $\left(1 - \tfrac{\alpha (\log n)^3}{n}\right)\binom{n}{r}$ bases.
\end{theorem}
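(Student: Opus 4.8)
I would derive this from the matroid-enumeration results of Pendavingh and van der Pol; here is the shape of the argument and where its weight lies. The key reformulation is that, since $J(n,r)$ is $r(n-r)$-regular with least eigenvalue $-\min(r,n-r)$, the Hoffman ratio bound gives $\alpha(J(n,r)) \le \binom{n}{r}/(\max(r,n-r)+1) < 2\binom{n}{r}/n$; as the non-bases of a sparse paving matroid of rank $r$ form a stable set of $J(n,r)$, \emph{every} sparse paving matroid has at least $(1-\tfrac{2}{n})\binom{n}{r}$ bases. So ``having at least $(1-\alpha(\log n)^3/n)\binom{n}{r}$ bases'' is a polylogarithmically relaxed weakening of ``being sparse paving'', and Theorem~\ref{manybases} asserts that asymptotically almost all matroids satisfy this weakening. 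Call such an $M$ \emph{rich}; I must show that the non-rich matroids form a $o(1)$-fraction of all $m_n$ matroids on $\{1,\dots,n\}$.

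The first ingredient is a lower bound on $m_n$. The Graham--Sloane partition argument --- sort the $r$-subsets of $\{1,\dots,n\}$ by the residue of $\sum_{i\in A}i$ modulo $n$ --- produces, for $r^\ast=\lceil n/2\rceil$, a stable set of $J(n,r^\ast)$ of size at least $\binom{n}{r^\ast}/n$; every subfamily of it is the set of circuit--hyperplanes of a sparse paving matroid, each of which is rich for $n$ large by the previous paragraph, so $m_n \ge 2^{\lceil\binom{n}{r^\ast}/n\rceil}$. Next I split the count by rank, $m_n=\sum_r m_{n,r}$. For ranks with $|r-\tfrac{n}{2}| \ge C\sqrt{n\log n}$ (a suitable constant $C$) one has $\binom{n}{r}\le n^{-4}\binom{n}{n/2}$, so even the trivial bound $m_{n,r}\le 2^{\binom{n}{r}}$ makes $\sum_{|r-n/2|\ge C\sqrt{n\log n}} m_{n,r}$ equal to $2^{o(\binom{n}{n/2}/n)}=o(m_n)$. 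It remains to bound, for each of the $O(\sqrt{n\log n})$ central ranks $r$, the number of rank-$r$ matroids with more than $j:=\alpha(\log n)^3\binom{n}{r}/n$ non-bases, and to show this is $o(m_n/\sqrt{n\log n})$.

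That last bound is the heart of the matter and the only non-elementary step. A matroid is determined by its bases, hence a rank-$r$ matroid by which $r$-sets are non-bases; but the naive count is hopeless --- estimating the number of non-rich rank-$r$ matroids by the number of ways to choose $\ge j$ of the $\binom{n}{r}$ $r$-sets to be non-bases gives $2^{\Theta(j\log n)}$, which far exceeds $m_n$ itself, since almost none of those families is the non-basis set of a matroid. One instead needs the structural enumeration of Pendavingh and van der Pol, which encodes a matroid through its system of cyclic flats together with their ranks, and shows that carrying that many excess non-bases forces an elaborate system of low-corank cyclic flats, of which there are only $2^{o(\binom{n}{n/2}/n)}$ possibilities --- the cube of the logarithm being exactly the polylogarithmic slack the analysis leaves. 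Granting this input, summing over the central ranks and combining with the tail estimate and the lower bound on $m_n$ shows that the non-rich matroids vanish in proportion, with $\alpha$ chosen to absorb the exponents in~\cite{PP16}. The main obstacle is thus entirely this structural counting, and in practice I would simply invoke~\cite{PP16} for it.
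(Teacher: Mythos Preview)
The paper does not prove Theorem~\ref{manybases}; it is quoted verbatim as one of ``two recent results of Pendavingh and van der Pol~\cite{PP15,PP16}'' and used as a black box. So there is no in-paper proof to compare your proposal against.

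Your outline is a reasonable reconstruction of the architecture of the Pendavingh--van der Pol argument: the Hoffman bound on stable sets in $J(n,r)$ correctly shows every sparse paving matroid is rich, the Graham--Sloane construction gives the standard lower bound $m_n \ge 2^{\binom{n}{\lceil n/2\rceil}/n}$, and the rank-tail estimate via $m_{n,r}\le 2^{\binom{n}{r}}$ is sound. But you explicitly defer the decisive step --- bounding the number of central-rank matroids with more than $\alpha(\log n)^3\binom{n}{r}/n$ non-bases --- back to~\cite{PP16}. That is precisely the content of the theorem you are meant to be proving, so the proposal is circular: it is the paper's one-line citation dressed with context rather than an independent proof. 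If your intent was only to indicate how the result sits inside~\cite{PP16}, the sketch is accurate; if it was to supply a proof the paper omits, the central counting argument is still missing.
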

\begin{theorem}[Theorem 16 from \cite{PP15}] \label{rankbounds}
    If $\beta > \sqrt{\tfrac{1}{2}\ln(2)}$, then asymptotically almost all matroids on $n$ elements have rank between $\tfrac{n}{2} - \beta \sqrt{n}$ and $\tfrac{n}{2} + \beta \sqrt{n}$. 
\end{theorem}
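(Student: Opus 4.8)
The plan is a pure counting argument. For $0 \le r \le n$ let $m_{n,r}$ be the number of matroids with ground set $\{1,\dots,n\}$ and rank $r$, and set $m_n=\sum_{r=0}^n m_{n,r}$. Proving Theorem~\ref{rankbounds} is equivalent to showing that for each fixed $\beta>\sqrt{\tfrac12\ln 2}$,
\[
\sum_{r\,:\,|r-n/2|>\beta\sqrt n} m_{n,r}=o(m_n),
\]
so that the matroids of atypical rank are a vanishing fraction. Matroid duality gives $m_{n,r}=m_{n,n-r}$, so it suffices to bound the part of this sum with $r<\tfrac n2-\beta\sqrt n$ and double.

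Two quantitative inputs are needed. For the denominator I would use the standard lower bound $m_n\ge m_{n,\lfloor n/2\rfloor}\ge 2^{(1-o(1))\binom{n}{\lfloor n/2\rfloor}/n}$, obtained by counting sparse paving matroids of rank $\lfloor n/2\rfloor$: partition $\binom{[n]}{\lfloor n/2\rfloor}$ into $n$ classes by the value of $\sum_{i\in A}i$ modulo $n$; since two $r$-sets differing by a single swap have different sums modulo $n$, each class is an independent set of the Johnson graph, so every subset of the largest class is the circuit-hyperplane family of a distinct sparse paving matroid (this is the Knuth / Graham--Sloane bound, and is available in \cite{PP15,PP16}). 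For the numerator the substantive ingredient is the Pendavingh--van der Pol upper bound on the number of matroids of a fixed rank, which has the form
\[
\log_2 m_{n,r}\le\bigl(2+o(1)\bigr)\frac{\binom nr}{n}\qquad\text{uniformly in }r.
\]
The leading constant $2$ here, set against the constant $1$ in the lower bound, is exactly what pins the threshold down to $\sqrt{\tfrac12\ln 2}$.

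Granting these bounds, the remainder is routine estimation of binomial coefficients. The right side of the displayed upper bound is monotone in $\binom nr$, and $\binom nr$ is unimodal in $r$ with peak at $\lfloor n/2\rfloor$; so over the tail $|r-n/2|>\beta\sqrt n$ the maximum of the bound occurs at $r$ just below $\tfrac n2-\beta\sqrt n$, where Stirling's formula (equivalently the local central limit theorem for the binomial) gives $\binom nr=\binom{n}{\lfloor n/2\rfloor}\,e^{-2\beta^2+o(1)}$. Hence
\[
\frac1{m_n}\sum_{r\,:\,|r-n/2|>\beta\sqrt n} m_{n,r}\ \le\ (n+1)\cdot 2^{\,\frac{\binom{n}{\lfloor n/2\rfloor}}{n}\left(2e^{-2\beta^2}-1+o(1)\right)},
\]
the factor $n+1$ counting the ranks (and every $\log n$-type term) being absorbed into the $o(1)$ since $\binom{n}{\lfloor n/2\rfloor}/n$ grows exponentially in $n$ while $\log n$ does not. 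When $\beta>\sqrt{\tfrac12\ln 2}$ we have $2e^{-2\beta^2}<1$, so the exponent tends to $-\infty$ and the ratio is $o(1)$; indeed it decays doubly exponentially in $n$.

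The main obstacle is entirely the enumeration bound $\log_2 m_{n,r}\le(2+o(1))\binom nr/n$: establishing it is the difficult theorem of Pendavingh and van der Pol, whose proof controls an arbitrary rank-$r$ matroid by a sparse paving matroid together with a bounded perturbation and bounds each factor separately. Everything else is elementary, but when writing it out one should check that the $o(1)$ terms in the Stirling estimate and in the Pendavingh--van der Pol bound are uniform over all $r$ in the relevant range, and separately dispose of $r$ near $0$ or $n$, where $m_{n,r}$ is tiny and even the trivial bound $m_{n,r}\le 2^{\binom nr}$ more than suffices.
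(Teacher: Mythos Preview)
The paper does not prove Theorem~\ref{rankbounds} at all: it is quoted verbatim as a result of Pendavingh and van der Pol \cite{PP15,PP16} and used as a black box. So there is no in-paper proof to compare your proposal against.

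That said, your sketch is essentially the argument Pendavingh and van der Pol themselves give. The structure---pair the Knuth/Graham--Sloane lower bound $\log_2 m_n \ge (1-o(1))\binom{n}{\lfloor n/2\rfloor}/n$ against a rank-specific upper bound $\log_2 m_{n,r}\le(2+o(1))\binom nr/n$, then use the Gaussian decay $\binom{n}{n/2-\beta\sqrt n}\sim\binom{n}{\lfloor n/2\rfloor}e^{-2\beta^2}$ to see that the tail vanishes exactly when $2e^{-2\beta^2}<1$, i.e.\ $\beta>\sqrt{\tfrac12\ln 2}$---is correct and recovers the sharp threshold in the statement. You are also right that the entire difficulty sits in the upper bound with constant $2$; everything else is bookkeeping. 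Your caveats about uniformity of the $o(1)$ terms and about handling $r$ near $0$ or $n$ separately are the appropriate ones to flag.
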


    Recall that $\mathsf{RB}$ denotes the algorithm that selects a basis $B$ uniformly at random from the set of all bases of $M$, and chooses the elements of $B$ as secretaries regardless of the weights. By Theorem~\ref{manybases}, nearly every $r$-set in a typical matroid is a basis, so a uniformly random basis can be sampled in probabilistic polynomial time in almost all matroids by repeatedly choosing a uniformly random $r$-set until a basis is chosen.

    We now state and prove a stronger version of Theorem~\ref{main2}.

\begin{reptheorem}{main2}
    Let $\gamma > \sqrt{8 \ln(2)}$. For asymptotically almost all matroids $M$ on $n$ elements, the algorithm $\mathsf{RB}$ is $(2+ \tfrac{\gamma}{\sqrt{n}})$-competitive for $M$. 
\end{reptheorem}
\begin{proof}
    Let $\gamma' \in (\sqrt{8 \ln (2)},\gamma)$. Note that for each $\alpha \in \bR$ we have $\tfrac{1}{2} - \tfrac{\gamma'}{4 \sqrt{n}} - \tfrac{\alpha (\log n)^3}{n} > (2 + \tfrac{\gamma}{\sqrt{n}})^{-1}$ for all sufficiently large $n$. Let $n$ be a positive integer and let $M$ be a matroid on $n$ elements with ground set $E = \{1,\dotsc, n\}$ and rank $r$. Let $\mathcal{B} \subseteq \binom{E}{r}$ denote the set of bases of $M$. For each $e \in E$ let $\cB_e = \{B \in \cB: e \in B\}$. By Theorems~\ref{manybases} and~\ref{rankbounds}, there exists $\alpha \in \bR$ so that, asymptotically almost surely, $|\cB| \ge (1 - \tfrac{\alpha (\log n)^3}{n})\binom{n}{r}$ and $r \ge (\tfrac{1}{2} - \tfrac{\gamma'}{4\sqrt{n}})n$. Thus, almost surely, 
    \begin{align*}
        |\cB_e| &\ge |\{B \in \binom{E}{r}: e \in B\}| - \left(\binom{n}{r}-|\cB|\right) \\
        &= \frac{r}{n}\binom{n}{r} - \binom{n}{r} + |\cB| \\ 
        &= \binom{n}{r}\left(\frac{|\cB|}{\binom{n}{r}} + \frac{r}{n} - 1\right).\\
        &\ge |\cB|\left(1 - \tfrac{\alpha (\log n)^3}{n} + \tfrac{1}{2} - \tfrac{\gamma'}{4\sqrt{n}} - 1 \right)\\
        &\ge |\cB|(2 + \tfrac{\gamma}{\sqrt{n}})^{-1}. 
    \end{align*}
    Let $B_0$ be a basis chosen uniformly at random from $\cB$, as per $\mathsf{RB}$. For each $e \in E$, we have $\pr(e \in B_0) = |\cB_e|/|\cB| \ge (2 + \tfrac{\gamma}{\sqrt{n}})^{-1} $. Given a weighting $w$ of $M$, we therefore have \[\ev(w(B_0)) = \sum_{e \in E}w(e) \pr(e \in B_0) \ge (2 + \tfrac{\gamma}{\sqrt{n}})^{-1} w(E)\ge (2 + \tfrac{\gamma}{\sqrt{n}})^{-1}\opt(M,w).\]  $\mathsf{RB}$ is thus $(2 + \tfrac{\gamma}{\sqrt{n}})$-competitive.
\end{proof}

A rank-$r$ matroid is $\emph{paving}$ if all its circuits have cardinality at least $r$. Although few well-known constructions of matroids have this property, it is believed to almost always hold.  The following conjecture due to Mayhew, Newman, Welsh, and Whittle \cite{MNWW11} is central in asymptotic matroid theory.

\begin{conjecture}\label{pavingconjecture}
    Asymptotically almost all matroids on $n$ elements are paving. 
\end{conjecture}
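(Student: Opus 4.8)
The statement is the Mayhew--Newman--Welsh--Whittle conjecture \cite{MNWW11}, which is open, so what follows is the programme one would pursue rather than a complete argument. Write $m_n$ for the number of matroids on $\{1,\dots,n\}$, $p_n$ for the number of those that are paving, and $s_n$ for the number that are \emph{sparse paving} --- paving with paving dual. Since $s_n \le p_n \le m_n$, it suffices to prove the stronger assertion $s_n/m_n \to 1$, equivalently $\log_2 m_n - \log_2 s_n \to 0$: that the matroids failing to be sparse paving are vanishingly rare. The plan is the classical ``squeeze'': pin down $\log_2 s_n$ from below and $\log_2 m_n$ from above with enough precision that their difference tends to $0$. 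By the rank concentration of Theorem~\ref{rankbounds}, and since both $s_n$ and $m_n$ are dominated in $\log_2$ by the ranks near $n/2$, the comparison can be carried out rank by rank near the middle.

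For the lower bound on $s_n$ I would use the standard description of the rank-$r$ sparse paving matroids on $\{1,\dots,n\}$ as the independent sets of the Johnson graph $J(n,r)$ --- vertices the $r$-subsets, adjacency when the symmetric difference has size $2$ --- the independent set being the family of circuit--hyperplanes, equivalently a binary constant-weight code of minimum distance $4$. Counting such codes for $r$ near $n/2$ via Knuth's construction and its refinements (Graham--Sloane, and Pendavingh--van der Pol \cite{PP15,PP16}) gives the sharpest available lower bound for $\log_2 m_n$, and $\log_2 s_n$ matches it; this is the enumerative counterpart of Theorem~\ref{manybases}, which says a typical matroid has very few non-bases among its near-middle $r$-subsets. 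The output of this step is an essentially tight value of $\log_2 s_n$ with control of its lower-order terms.

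For the upper bound on $m_n$ I would run a compression/entropy argument in the spirit of Bansal--Pendavingh--van der Pol: a matroid is determined by its family of bases, and one encodes that family level by level around rank $n/2$, using the basis-exchange axioms to show that the admissible local patterns of the basis family near the middle rank are, up to a correction one must force to be negligible, exactly distance-$4$ Johnson codes. The number of matroids is then bounded by the number of such codes times that correction factor, and the target is to drive $\log_2 m_n$ down to $\log_2 s_n + o(1)$, whence $s_n/m_n \to 1$ and therefore $p_n/m_n \to 1$.

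The main obstacle is exactly this upper bound, and it is the heart of the conjecture. The best current estimates give only $\log_2 m_n \le (1+o(1))\log_2 s_n$ --- a multiplicative bound with the correct leading term --- but $\log_2 s_n$ is exponentially large in $n$ (and $s_n$ itself doubly exponential), so a multiplicative $(1+o(1))$ is astronomically weaker than the additive $o(1)$ that $s_n/m_n \to 1$ requires. Closing this gap, by showing that encoding the ``non-sparse-paving part'' of a typical matroid costs a genuinely negligible number of bits uniformly over all ranks, is where a genuinely new idea would be needed; none of the lifting, projection, or tree-decomposition machinery developed earlier in the paper bears on it.
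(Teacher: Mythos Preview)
You correctly identify that this statement is a conjecture, not a theorem: the paper does not prove it, but merely states it as Conjecture~\ref{pavingconjecture} (attributed to Mayhew, Newman, Welsh, and Whittle) and then uses it as a hypothesis in Theorem~\ref{main3}. There is therefore no proof in the paper to compare your proposal against, and you are right to present a programme rather than an argument.

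Your outline of that programme is accurate and well-informed: the reduction to sparse paving matroids, the Johnson-graph description of sparse paving matroids as independent sets (equivalently, constant-weight codes), the lower bound on $\log_2 s_n$ via Knuth/Graham--Sloane/Pendavingh--van der Pol, and the compression upper bound on $\log_2 m_n$ in the style of Bansal--Pendavingh--van der Pol. You also correctly identify the obstruction: current technology gives only $\log_2 m_n \le (1+o(1))\log_2 s_n$, which is far too weak to conclude $s_n/m_n \to 1$ since $\log_2 s_n$ is itself exponential in $n$. Your closing remark that the lifting/projection/tree-decomposition machinery of the paper is irrelevant here is also apt.
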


An alternative characterisation is that a matroid $M$ is paving if and only if its truncation to rank $r(M)-1$ (which we denote as $T(M)$) is a uniform matroid. The matroid secretary problem for uniform matroids is known as the \emph{multiple-choice secretary problem}, and has been essentially completely solved by Kleinberg \cite{Kleinberg05}. In the language of matroids, his result is the following.

\begin{theorem}
    Let $r > 25$. There is an algorithm $\mathsf{UNI}$ that is  $(1 - \tfrac{5}{\sqrt{r}})^{-1}$-competitive  for each uniform matroid $U_{r,n}$. 
\end{theorem}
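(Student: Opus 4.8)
The plan is to follow Kleinberg's recursive algorithm for the multiple-choice secretary problem. First I would normalise the instance: after an arbitrarily small perturbation I may assume the weights are distinct, say $w_1 > w_2 > \dots > w_n$, so that $\opt(U_{r,n},w) = w_1 + \dots + w_r$. I would then work in the equivalent ``continuous-time'' presentation in which each element independently receives a uniform arrival time in $[0,1]$ (this realises exactly the random-order distribution); the point of this presentation is that cutting $[0,1]$ at $\tfrac12$ places each element into an ``early'' or a ``late'' half independently with probability $\tfrac12$.

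Next I would define $\mathsf{UNI}_r$ recursively in $r$. For $r$ below a fixed threshold (where $1-5/\sqrt r\le 0$, so the claim is vacuous) let $\mathsf{UNI}_r$ be any algorithm for $U_{r,n}$ of bounded competitive ratio. For larger $r$: run $\mathsf{UNI}_{\lfloor r/2\rfloor}$ on the early half, producing a selected set $S$ with $|S|\le\lfloor r/2\rfloor$; let $\theta$ be the $\lfloor r/2\rfloor$-th largest weight seen in the early half; then, scanning the late half, select every element whose weight exceeds $\theta$, stopping once $r$ elements have been chosen in total. The output is independent in $U_{r,n}$ by construction, so everything reduces to lower-bounding its expected weight.

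For the analysis, condition on the weight sequence and let $f(r)$ be the least number with $\ev\big(\text{weight collected by }\mathsf{UNI}_r\big)\ge(1-f(r))\opt$. I would prove a recursion of the form $f(r)\le\tfrac12 f(\lfloor r/2\rfloor)+C/\sqrt r$; since $\sum_{j\ge0}2^{-j}\sqrt{2^{j}}$ converges, this telescopes to $f(r)=O(1/\sqrt r)$. Let $N$ be the number of $w_1,\dots,w_r$ landing in the early half, so $N\sim\mathrm{Bin}(r,\tfrac12)$, with $\ev\big(|N-\tfrac r2|\big)=O(\sqrt r)$ and exponentially small large-deviation tails. Three deficits need to be estimated, each $O(\opt/\sqrt r)$ in expectation. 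First, by induction the early run collects at least $1-f(\lfloor r/2\rfloor)$ times the sum of the $\lfloor r/2\rfloor$ largest early weights; that sum is at least $\sum_{i\le r,\ i\text{ early}}w_i$ less at most $(N-\lfloor r/2\rfloor)^+$ weights of size $\le\tfrac2r\opt$, and $\ev\big(\sum_{i\le r,\ i\text{ early}}w_i\big)=\tfrac12\opt$, so the early deficit is $\tfrac12 f(\lfloor r/2\rfloor)\opt+O(\opt/\sqrt r)$. Second, a top-$r$ element arriving late fails the threshold test only if at least $\lfloor r/2\rfloor$ early elements exceed it; for indices within $O(\sqrt r)$ of $r$ this can cost only weights of size $\le\tfrac2r\opt$, while for indices farther from $r$ it is exponentially unlikely, so the weight lost this way is $O(\opt/\sqrt r)$. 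Third, the late budget can run out only after more than $\lceil r/2\rceil$ elements have passed the threshold there, which forces $N$ to overshoot $\tfrac r2$ by the overflow amount; the dropped elements are again of weight $\le\tfrac2r\opt$, for an expected loss of $O(\opt/\sqrt r)$. Summing these three deficits gives the recursion, hence $f(r)\le 5/\sqrt r$ once $r$ is large enough.

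The hard part is the quantitative bookkeeping rather than the structure: all three loss terms are genuinely of order $1/\sqrt r$, so to obtain the explicit constant $5$ and the explicit threshold $r>25$ — rather than merely $1+O(1/\sqrt r)$ — one must track the constants in the Chernoff inequalities, in the mean absolute deviation of a Binomial, and in the ``$w_i\le\tfrac2r\opt$'' estimates, and check that the resulting $C$ satisfies $(2+\sqrt2)C\le5$ with the base case absorbed. This bookkeeping is carried out in \cite{Kleinberg05}; the only conceptual ingredient beyond the classical single-secretary bound is the concentration of a random split of the top $r$ elements around $r/2$.
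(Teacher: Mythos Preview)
The paper does not prove this statement at all: it is quoted verbatim as Kleinberg's theorem and attributed to \cite{Kleinberg05}, with no argument given. So there is no ``paper's own proof'' to compare against; your write-up already goes well beyond what the paper does.

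As a sketch of Kleinberg's argument, your outline is accurate: the recursive halving with a threshold learned from the first half, the recursion $f(r)\le\tfrac12 f(\lfloor r/2\rfloor)+C/\sqrt{r}$, and the decomposition of the loss into the early-half induction deficit, the late-half threshold-miss, and the late-half budget-overflow are exactly the structure of the original proof. You are also right that the only place any real work remains is pinning down the constants so that the final bound is $5/\sqrt{r}$ for $r>25$ rather than merely $O(1/\sqrt{r})$; the paper sidesteps this entirely by citation, and for the purposes of this paper that is all that is needed, since Theorem~\ref{paving} only uses the black-box competitive ratio.
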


We slightly modify $\mathsf{UNI}$ to deal with paving matroids. Let $\mathsf{PAV}$ be the algorithm that, given a rank-$(r > 26)$ weighted paving matroid $(M,w)$, returns the output of $\mathsf{UNI}$ on the rank-$(r-1)$ weighted uniform matroid $(T(M),w)$. Note that, since every independent set in $T(M)$ is independent in $M$, this output is always legal. 

\begin{theorem} \label{paving}
    The algorithm $\mathsf{PAV}$ is $(1 - \tfrac{6}{\sqrt{r}})^{-1} $-competitive for every paving matroid of rank at least $37$. 
\end{theorem}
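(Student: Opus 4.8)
The plan is to reduce the analysis of $\mathsf{PAV}$ on a paving matroid directly to that of $\mathsf{UNI}$ on its rank-$(r-1)$ truncation, losing only a small multiplicative factor from the change in rank. First I would record the structural facts: since $M$ is paving of rank $r \ge 37$, its truncation $T(M)$ is the uniform matroid $U_{r-1,n}$ with $n = |E(M)| \ge r$, and every independent set of $T(M)$ is independent in $M$, so the set returned by $\mathsf{PAV}$ (the output of $\mathsf{UNI}$ on $(T(M),w)$) is always independent in $M$, as already noted. Because $r-1 \ge 36 > 25$, the stated theorem on $\mathsf{UNI}$ applies and gives that $\mathsf{UNI}$ is $(1-\tfrac{5}{\sqrt{r-1}})^{-1}$-competitive for $T(M)$.

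Next I would compare the two optima. Let $B$ be a maximum-weight basis of $M$, so $w(B) = \opt(M,w)$, and let $y$ be a minimum-weight element of $B$; then $w(y) \le \tfrac1r w(B)$ since $|B| = r$. The set $B - \{y\}$ has size $r-1$ and is independent in $M$, hence independent in $T(M)$, so $\opt(T(M),w) \ge w(B) - w(y) \ge (1-\tfrac1r)\opt(M,w)$. Combining this with the competitiveness of $\mathsf{UNI}$, the value of $\mathsf{PAV}$ on $(M,w)$ equals the value of $\mathsf{UNI}$ on $(T(M),w)$, which is at least $(1-\tfrac{5}{\sqrt{r-1}})\opt(T(M),w) \ge (1-\tfrac{5}{\sqrt{r-1}})(1-\tfrac1r)\opt(M,w)$.

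Finally I would verify the elementary inequality $(1-\tfrac{5}{\sqrt{r-1}})(1-\tfrac1r) \ge 1 - \tfrac{6}{\sqrt r}$ for all $r \ge 37$; this is also the step that uses the rank hypothesis, since $1 - \tfrac{6}{\sqrt r} > 0$ precisely when $r \ge 37$, which is what makes the asserted competitive ratio meaningful. Expanding the left-hand side and discarding the positive term $\tfrac{5}{r\sqrt{r-1}}$, it suffices to show $\tfrac1r + \tfrac{5}{\sqrt{r-1}} \le \tfrac{6}{\sqrt r}$; using $\sqrt{r-1} \ge \sqrt r - \tfrac1{\sqrt r} = \tfrac{r-1}{\sqrt r}$ (valid for $r \ge 1$), this reduces after multiplying by $\sqrt r$ to $\tfrac{5r}{r-1} \le 6 - \tfrac1{\sqrt r}$, whose left side is decreasing and whose right side is increasing in $r$, so it is enough to check the case $r = 37$, where it holds comfortably.

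There is no genuine obstacle here; the proof is a short reduction. The only points that require care are the rank bookkeeping — that $T(M)$ really has rank $r-1$ and is uniform, so that $\mathsf{UNI}$ applies with parameter $r-1 > 25$ — and pinning down why the threshold must be $37$, namely that $(1-\tfrac{6}{\sqrt r})^{-1}$ is only a valid (positive, finite) competitive ratio once $r \ge 37$.
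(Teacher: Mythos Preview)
Your proposal is correct and follows exactly the same route as the paper: remove a minimum-weight element from an optimal basis to bound $\opt(T(M),w) \ge (1-\tfrac{1}{r})\opt(M,w)$, apply the $\mathsf{UNI}$ guarantee with parameter $r-1$, and then verify the elementary inequality $(1-\tfrac{5}{\sqrt{r-1}})(1-\tfrac{1}{r}) \ge 1 - \tfrac{6}{\sqrt{r}}$ for $r \ge 37$. The only difference is that you spell out the verification of this last inequality, which the paper simply asserts.
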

\begin{proof}
    Let $B$ be a maximum-weight basis of a rank-$r$ paving matroid $M$ with weights $w$, and let $x \in B$ have minimum weight. Now $B-\{x\}$ is a basis of $T(M)$ and has weight at least $\tfrac{r-1}{r}w(B)$, so $\opt(T(M),w) \ge (1 - \tfrac{1}{r})\opt(M,w)$. Let $I$ be the independent set output by $\mathsf{UNI}$ on $T(M)$. Now 
    \[\ev(w(I)) \ge (1 - \tfrac{5}{\sqrt{r-1}})\opt(T(M),w) \ge (1-\tfrac{5}{\sqrt{r-1}})(1-\tfrac{1}{r})\opt(M,w).\]
    Since $(1- \tfrac{5}{\sqrt{r-1}})(1-\tfrac{1}{r}) \ge 1 - \tfrac{6}{\sqrt{r}}$ for all $r \ge 37$,  $\mathsf{PAV}$ is thus $(1 - \tfrac{6}{\sqrt{r}})^{-1}$-competitive. 
\end{proof}

As promised, we finish with a proof of Theorem \ref{main3}, which we restate for convenience. 

\begin{reptheorem}{main3}
If asymptotically almost all matroids are paving, then there is a $(1+o(1))$-competitive matroid secretary algorithm for asymptotically almost all matroids.
\end{reptheorem}

\begin{proof}
By Theorem~\ref{rankbounds}, almost all matroids have $37 < r \approx \tfrac{n}{2}$ and so, conditioned on Conjecture~\ref{pavingconjecture}, Theorem \ref{paving} implies that $\mathsf{PAV}$ is $(1+ \epsilon(n))$-competitive for asymptotically almost all matroids on $n$ elements, where $\epsilon(n) \approx \tfrac{6\sqrt{2}}{\sqrt{n}}$.
\end{proof}

\bibliography{matroidsecretary}
\bibliographystyle{plain}

\end{document}